\newtheorem{theorem}{\hskip\parindent \textsc{Theorem}}[section]
\newtheorem{lemma}{\hskip\parindent \textsc{Lemma}}[section]
\newtheorem{proposition}{\hskip\parindent \textsc{Proposition}}[section]
\def\loc{\text{\rm loc}}
\def\supp{\mathop{\rm supp}\nolimits}
\renewcommand{\refname}{Bibliography}
\begin{document}


\begin{center} 
{\bf
\large {On weighted Hardy inequality with two-dimensional rectangular operator -- extension of the E. Sawyer theorem}} 
\end{center}
\begin{center}
{\bf  {V. D. Stepanov\footnote{Computing Center of FEB RAS, Khabarovsk 680000, Russia; E-mail: stepanov@mi-ras.ru.} and E. P. Ushakova\footnote{V. A. Trapeznikov Institute of Control Sciences of RAS, Moscow 117997, Russia; E-mail: elenau@inbox.ru.}}}
\end{center}
\vskip0.2cm

{\small  {\bf Abstract:} A characterization is obtained for those pairs of weights $v$ and $w$ on $\mathbb{R}^2_+$, for which the two--dimensional rectangular integration operator is bounded from a weighted Lebesgue space $L^p_v(\mathbb{R}^2_+)$ to $L^q_w(\mathbb{R}^2_+)$ for $1<p\not= q<\infty$, which is an essential complement to E. Sawyer's result \cite{Saw1} given for $1<p\leq q<\infty$. Besides, we declare that the E. Sawyer theorem is actual if $p=q$ only, for $p<q$ the criterion is less complicated. The case $q<p$ is new.}

{\bf Key words:} {\small Rectangular integration operator; Hardy inequality; weighted Lebesgue space.} 

{\bf MSC:} {26D10, 47G10}

\parindent 15pt
\section{Introduction}

Let $n\in\mathbb{N}$. For Lebesgue measurable functions $f(y_1,\ldots,y_n)$ on $\mathbb{R}_+^n:=(0,\infty)^n$ the $n$--dimensional rectangular integration operator $I_n$ is given by the formula
\begin{equation*}
I_nf(x_1,\ldots,x_n)\colon=\int_0^{x_1}\ldots
\int_0^{x_n}f(y_1,\ldots,y_n)\,{d}y_1\ldots dy_n\qquad (x_1,\ldots,x_n>0).
\end{equation*}
The dual transformation $I^\ast_n$ has the form
\begin{equation*}\label{n1*}
I^\ast_nf(x_1,\ldots,x_n)\colon=\int_{x_1}^\infty\ldots
\int_{x_n}^\infty f(y_1,\ldots,y_n)\,{d}y_1\ldots dy_n\qquad (x_1,\ldots,x_n>0).
\end{equation*}

Let $1<p,q<\infty$ and $v,w\ge 0$ be weight functions on $\mathbb{R}_+^n$.
Consider Hardy's inequality
\begin{equation}\label{n2}
\biggl(\int_{\mathbb{R}^n_+}\bigl(I_nf\bigr)^q
w\biggr)^{\frac{1}{q}}\le C_n
\biggl(\int_{\mathbb{R}^n_+}f^pv\biggr)^{\frac{1}{p}}\qquad (f\ge 0)
\end{equation}
on the cone of non--negative functions in weighted Lebesgue space $L^p_v(\mathbb{R}^n_+)$. The constant $C_n>0$ in \eqref{n2} is assumed to be the least possible and independent of $f$. For a fixed weight $v$ and a parameter $p>1$ the space $L^p_v(\mathbb{R}^n_+)$ consists of all measurable on $\mathbb{R}_+^n$ functions $f$ such that $\int_{\mathbb{R}_+^n}|f|^pv<\infty$.

The problem of characterizing the inequality \eqref{n2} is well known and has been considered by many authors (see \cite{Barza, KMP1, Mes, PU, Saw1, W1, B5} and references therein).  The one--dimensional case of this inequality has been completely studied (see \cite{Maz, KMP, KPS, PSU}).  However, for $n>1$ difficulties arise,
preventing characterizing \eqref{n2} without additional restrictions on $v$ and $w$. Nevertheless, E. Sawyer's result is well known for arbitrary $ v, w $ in the case $1<p\le
q<\infty$. To formulate it we denote $p':=p/(p-1)$ and $\sigma:=v^{1-p'}$.
\begin{theorem}{\rm\cite[Theorem 1A]{Saw1}}\label{ES}
Let $n=2$ and $1<p\le
q<\infty.$ The inequality \eqref{n2} holds for all measurable non-negative functions $f$ on $\mathbb{R}^2_+$
if and only if
\begin{equation}\label{n3'}
A_{1}:=A_{1}(p,q):=\sup_{(t_1,t_2)\in\mathbb{R}_+^2}\bigl[I^\ast_2 w(t_1,t_2)\bigr]^{\frac{1}{q}}
\bigl[I_2 \sigma(t_1,t_2)\bigr]^{\frac{1}{p'}}<\infty,\end{equation}\begin{equation}
\label{n4'}A_{2}:=A_{2}(p,q):=\sup_{(t_1,t_2)\in\mathbb{R}_+^2}\biggl(\int_0^{t_1}\int_0^{t_2}
\bigl(I_2\sigma\bigr)^q w\biggr)^{\frac{1}{q}}
\bigl[I_2 \sigma(t_1,t_2)\bigr]^{-\frac{1}{p}}<\infty,\end{equation}
\begin{equation}
\label{n5'}A_{3}:=A_{3}(p,q):=\sup_{(t_1,t_2)\in\mathbb{R}_+^2}\biggl(\int_{t_1}^\infty\int_{t_2}^\infty
\bigl(I_2^\ast w\bigr)^{p'}\sigma\biggr)^{\frac{1}{p'}}
\bigl[I_2^\ast w(t_1,t_2)\bigr]^{-\frac{1}{q'}}<\infty,
\end{equation}
{and $C_2\approx A_1+A_2+A_3$ with equivalence constants depending on parameters $p$ and $q$}.
\end{theorem}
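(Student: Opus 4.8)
The plan is to prove the two directions separately: necessity of $A_1,A_2,A_3$ by inserting explicit test functions into \eqref{n2}, and sufficiency of their finiteness by a dyadic decomposition adapted to the monotone function $I_2\sigma$. The computation that makes the necessity part transparent is the pointwise identity $\sigma^p v=v^{(1-p')p+1}=v^{1-p'}=\sigma$, since then the test function $f_t:=\sigma\,\chi_{[0,t_1]\times[0,t_2]}$ satisfies $\int_{\mathbb{R}^2_+} f_t^p v=I_2\sigma(t_1,t_2)$, while $I_2 f_t(x)=I_2\sigma(x)$ for $x\le t$ and $I_2 f_t(x)=I_2\sigma(t)$ for $x\ge t$. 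Estimating the left-hand side of \eqref{n2} once over $[t_1,\infty)\times[t_2,\infty)$, where $I_2 f_t\equiv I_2\sigma(t)$, yields $A_1\le C_2$, and once over $[0,t_1]\times[0,t_2]$ yields $A_2\le C_2$.

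For $A_3$ I would pass to the dual inequality $\|I_2^\ast g\|_{L^{p'}_\sigma}\le C_2\|g\|_{L^{q'}_{w^{1-q'}}}$, which is equivalent to \eqref{n2} with the same constant by Fubini's theorem applied to $\int (I_2 f)\,g\,=\int f\,(I_2^\ast g)$. Because $(1-q')(1-q)=1$, the conjugate weight of $w^{1-q'}$ is $w$ itself, so $A_3$ is exactly the $A_2$-condition for the adjoint operator $I_2^\ast$ (which averages to the upper right), with $w$ playing the role of $\sigma$ and target weight $\sigma$. Testing that dual inequality with $g_t:=w\,\chi_{[t_1,\infty)\times[t_2,\infty)}$, for which $\|g_t\|_{L^{q'}_{w^{1-q'}}}^{q'}=I_2^\ast w(t)$ and $I_2^\ast g_t(x)=I_2^\ast w(x)$ on $x\ge t$, then gives $A_3\le C_2$ by the same manipulation.

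For sufficiency I would argue directly on \eqref{n2}. Writing $g:=I_2\sigma$, which is nondecreasing in each variable, its superlevel sets $\Omega_k:=\{x:\ g(x)>2^k\}$ are up-sets bounded below by decreasing staircase curves, and the layers $E_k:=\Omega_k\setminus\Omega_{k+1}$, on which $g\approx 2^k$, partition $\mathbb{R}^2_+$. Decomposing $\int(I_2 f)^q w=\sum_k\int_{E_k}(I_2 f)^q w$, on each layer I would split the domain of integration $[0,x]$ along the staircase boundary, writing $I_2 f$ as the contribution of $f$ living strictly below the boundary (the ``corner'' part, controlled pointwise by $A_1$ after Hölder's inequality against $\sigma$) plus the contribution of $f$ living on the layer itself (controlled by the local mass $\int(I_2\sigma)^q w$ entering $A_2$, and, after a duality step inside the layer, by $A_3$). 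The normalisation $g\approx 2^k$ converts the three suprema into honest numerical constants on each $E_k$.

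The decisive step is the reassembly of the layerwise estimates. Because $1<p\le q$, the numerical series obeys $\bigl(\sum_k a_k^q\bigr)^{1/q}\le\bigl(\sum_k a_k^p\bigr)^{1/p}$, and this embedding is what lets me sum back to $\|f\|_{L^p_v}$ with no logarithmic loss; it is precisely here that $p\le q$ is used, and its failure for $q<p$ is what forces the more complicated criterion announced in the abstract. I expect the genuine obstacle to be the two-dimensional geometry: the level sets of $I_2\sigma$ are staircase sets rather than rectangles, so the corner and interaction pieces do not decouple along the two axes, and it is exactly to dominate the cross-interaction across the staircase boundary that the single Muckenhoupt-type quantity $A_1$ is insufficient and the two global testing conditions $A_2$ and $A_3$ must be invoked together. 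Controlling the overlap of consecutive layers and verifying that the interaction terms are truly bounded by $A_2$ and $A_3$, rather than by some strictly larger functional, is where the bulk of the technical work will lie.
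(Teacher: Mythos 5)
First, note that the paper does not prove this statement at all: Theorem \ref{ES} is quoted verbatim from Sawyer \cite{Saw1}, and the only place its machinery appears is in the proof of Theorem \ref{aga}, which adapts Sawyer's argument to $p\neq q$. So the comparison below is really with Sawyer's proof as it is reproduced there. Your necessity half is correct and is exactly the standard route: the identity $\sigma^p v=\sigma$, the test functions $f_t=\sigma\chi_{(0,t_1)\times(0,t_2)}$ giving $A_1\le C_2$ and $A_2\le C_2$, and the duality $\|I_2^\ast g\|_{L^{p'}_\sigma}\le C_2\|g\|_{L^{q'}_{w^{1-q'}}}$ with $g_t=w\chi_{(t_1,\infty)\times(t_2,\infty)}$ giving $A_3\le C_2$. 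This part needs no changes.

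The sufficiency half has a genuine gap, and it is located exactly where you decide to stratify by the level sets of $g=I_2\sigma$ rather than of $I_2f$. Normalising $I_2\sigma\approx 2^k$ on a layer $E_k$ tells you nothing about the size of $I_2f$ there, so the quantity you actually need to control, $\int_{E_k}(I_2f)^qw$, does not reduce to ``honest numerical constants.'' If you try to recover $I_2f$ on $E_k$ by H\"older against $\sigma$, you get $I_2f(x)\le\bigl(\int_0^{x_1}\int_0^{x_2}f^pv\bigr)^{1/p}\bigl[I_2\sigma(x)\bigr]^{1/p'}$, and in one dimension the layer is an interval whose right endpoint lets you sum the local masses $\int f^pv$ disjointly; in two dimensions the layer is bounded by a decreasing staircase, the points of $E_k$ form an antichain, and $\sup_{x\in E_k}\int_0^{x_1}\int_0^{x_2}f^pv$ is essentially the full norm $\int f^pv$ for every $k$, so the reassembly step produces a divergent sum rather than the embedding $\ell^p\hookrightarrow\ell^q$ you invoke. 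Sawyer's proof avoids this precisely by taking $\Omega_k=\{I_2f>3^k\}$ (so that $\int_{\Omega_k\setminus\Omega_{k+1}}(I_2f)^qw\approx 3^{kq}|\Omega_k\setminus\Omega_{k+1}|_w$ for free), and then the entire weight of the argument falls on a combinatorial construction that your sketch omits: the choice of points $(x_j^k,y_j^k)$ on $\partial\Omega_k$ with $(x_j^k,y_{j-1}^k)\in\partial\Omega_{k+1}$, the families of rectangles $S_j^k$, $\widetilde S_j^k$, $R_j^k$, $T_j^k$, the disjoint sets $E_j^k$, and a second, independent stratification indexed by $l$ according to the averages $|R_j^k|_\sigma^{-1}\int_{R_j^k}g\sigma$ with the selection of maximal rectangles $U_i^l$. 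It is this double decomposition, not the monotonicity of $I_2\sigma$, that makes the cross-interaction across the staircase boundary summable and lets $A_2$ control the corner term and $A_3$ (via the $\mathbf W$-type estimate on $\widetilde S_j^k$) control the layer term, with $A_1$ entering in the mixed rectangles. Without supplying that mechanism the sufficiency argument does not close.
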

Note that in one--dimensional case the analogs of the conditions  \eqref{n3'}--\eqref{n5'} are equivalent to each other \cite{GKPW}. For $n=2$ this, generally speaking, is not true. Moreover, as shown in \cite[\S\,4]{Saw1} for $p=q=2$, no two of the conditions \eqref{n3'}--\eqref{n5'} guarantee \eqref{n2}. However, the construction of the second counterexample in \cite[\S\,4]{Saw1} is unexpandable to the case $p<q.$

The purpose of this paper is to obtain new criteria for the fulfillment of Hardy's inequality \eqref{n2} for $n=2$ and $1<p\not=q<\infty$. The solution to this problem is contained in Theorem \ref{aga} (see \S\,2). In \S~3 an alternative sufficient condition is found for $ v $ and $ w $, when \eqref{n2} is true for $n=2$ and $1<q<p<\infty$. Recall that the criterion for \eqref{n2} when $n=2$ and $1<p\le q<\infty,$ established in  \cite{Saw1}, is that the sum of three independent functionals is bounded (see Theorem \ref{ES}). It is proven in Theorem \ref {aga} that for $ 1 <p \not = q <\infty $ the inequality \eqref {n2} is characterized by only one functional.

Analogs of Theorem \ref{aga} are also valid for the dual operator $I_2^\ast $ and mixed Hardy operators (see \cite[Remark 1]{Saw1} for details).

In \S \,4, for completeness, we present known results about the operator $ I_n $ for arbitrary $ n $, provided that at least one of the two weight functions in \eqref {n2} is factorizable, that is, can be represented as a product of $n$ one--dimensional functions.

Since $A_1\leq C_2$, we may and shall assume that $I_2\sigma(x,y)<\infty$ and $I_2^\ast w(x,y)<\infty$ for any $(x,y)\in \mathbb{R}_+^2.$ In particular, $\sigma, w \in L^1_{\loc}(\mathbb{R}_+^2).$

Throughout the work, the notation of the form $ \Phi \lesssim \Psi $ means that the relation $ \Phi \le c \Psi $ holds with some constant $ c> 0 $, independent of $ \Phi $ and $ \Psi $. We write $\Phi \approx \Psi $ in the case of $ \Phi\lesssim \Psi \lesssim \Phi $.
The symbols $ \mathbb Z $ and $\mathbb N $ are used for denoting the sets of integers and natural numbers, respectively. The characteristic function of the subset $ E\subset\mathbb {R}^n_+$ is denoted by $\chi_E $. Symbols $: = $ and
$ =: $ are used to define new values.

\section{Main result}

Denote
$$
\alpha(p,q):=\frac{p^2(q-1)}{q-p},\qquad p<q;
$$
$$
\beta(p,q):=\frac{2^{q+1}}{2^{\frac{q}{r}}-1}\cdot
\begin{cases}2^{\frac{q}{p}-\frac{q}{r}}, & \frac{r}{p}\ge 1,\\ 1, &\frac{r}{p}<1,
\end{cases} \qquad q<p,
$$
where ${1}/{r}:={1}/{q}-{1}/{p}$; $A:=A_1,$
\begin{multline*}B:=B_1:=B_1(p,q):=\biggl(
\int_{\mathbb{R}^2_+} d_y\bigl[I_2\sigma(x,y)\bigr]^{\frac{r}{p'}}\,d_x\Bigl(-\bigl[I_2^\ast w(x,y)\bigr]^{\frac{r}{q}}\Bigr)\biggr)^{\frac{1}{r}}\\=\biggl(\int_{\mathbb{R}_+^2}\bigl[I_2 \sigma(x,y)\bigr]^{\frac{r}{p'}}\,d_x\,d_y\bigl[I^\ast_2 w(x,y)\bigr]^{\frac{r}{q}}\biggr)^{\frac{1}{r}}=
\biggl(\int_{\mathbb{R}_+^2}\bigl[I^\ast_2 w(x,y)\bigr]^{\frac{r}{q}}\,d_x\,d_y\bigl[I_2 \sigma(x,y)\bigr]^{\frac{r}{p'}}\biggr)^{\frac{1}{r}},\end{multline*} where the last two equalities follow by integration by parts; also
\begin{align*}
B_{2}:=B_2(p,q):=&\Biggl(\int_{\mathbb{R}_+^2}\bigl[I_2 \sigma(x,y)\bigr]^{-\frac{r}{p}}\,d_x\,d_y\biggl(\int_0^{x}\int_0^{y}(I_2\sigma)^q w\biggr)^{\frac{r}{q}}\Biggr)^{\frac{1}{r}},\\ B_{3}:=B_3(p,q):=&\Biggl(\int_{\mathbb{R}_+^2}\bigl[I_2 ^\ast w(x,y)\bigr]^{-\frac{r}{q'}}\,d_x\,d_y\biggl(\int_{x}^\infty\int_{y}^\infty(I_2^\ast w)^{p'} \sigma\biggr)^{\frac{r}{p'}}\Biggr)^{\frac{1}{r}}.\end{align*}
 Notice that
\begin{equation}\label{AB}
\lim\limits_{q\uparrow p}B_i(p,q)=A_i(p,p),\quad i=1,2,3.
\end{equation}

Let us recall the result we need in what follows from the work \cite{GHS}.
\begin{proposition}\label{propGHS}{\rm \cite[Proposition 2.1]{GHS}}
Let $0<\gamma<\infty$ and let $\{a_k\}_{k\in\mathbb{Z}}$, $\{\rho_k\}_{k\in\mathbb{Z}}$, $\{\tau_k\}_{k\in\mathbb{Z}}$ be non-negative sequences.\\
{\rm (a)} If $\rho:=\inf_{k\in\mathbb{Z}}{\rho_{k+1}}/{\rho_k}>1$ then $$\sum_{k\in\mathbb{Z}}\Bigl(\sum_{m\ge k}a_m\Bigr)^\gamma\rho_k^\gamma\le \sum_{m\in\mathbb{Z}}a_m^\gamma\rho_m^\gamma\cdot\begin{cases}\frac{\rho^\gamma}{\rho^\gamma-1},& 0<\gamma\le 1,\\ \frac{\rho^\gamma}{(\rho^{\gamma'-1}-1)^{\gamma-1}(\rho^{\gamma-1}-1)}, &\gamma>1.\end{cases}$$\\
{\rm (b)} If $\tau:=\sup_{k\in\mathbb{Z}}{\tau_{k+1}}/{\tau_k}<1$ then
$$\sum_{k\in\mathbb{Z}}\Bigl(\sum_{m\le k}a_m\Bigr)^\gamma\tau_k^\gamma\le \sum_{m\in\mathbb{Z}}a_m^\gamma\tau_m^\gamma\cdot\begin{cases}\frac{\tau^{-\gamma}}{\tau^{-\gamma}-1},& 0<\gamma\le 1,\\ \frac{\tau^{-\gamma}}{(\tau^{1-\gamma'}-1)^{\gamma-1}(\tau^{1-\gamma}-1)}, &\gamma>1.\end{cases}$$
\end{proposition}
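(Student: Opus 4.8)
The plan is to prove part (a) directly and then to obtain part (b) from it by the reflection $k\mapsto -k$. Throughout, the structural hypothesis $\rho:=\inf_k\rho_{k+1}/\rho_k>1$ enters only through its consequence that $\rho_k\le \rho^{-(m-k)}\rho_m$ whenever $k\le m$; that is, the weights grow at least geometrically, which is exactly what makes every auxiliary sum below a convergent geometric series.

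For the easy range $0<\gamma\le 1$ I would first invoke the elementary subadditivity $\bigl(\sum_m b_m\bigr)^\gamma\le\sum_m b_m^\gamma$ for nonnegative terms to get $\bigl(\sum_{m\ge k}a_m\bigr)^\gamma\le\sum_{m\ge k}a_m^\gamma$. Substituting this into the left-hand side and interchanging the order of summation turns the estimate into $\sum_m a_m^\gamma\sum_{k\le m}\rho_k^\gamma$. The inner sum is then controlled, via $\rho_k^\gamma\le\rho^{-\gamma(m-k)}\rho_m^\gamma$, by the geometric series $\rho_m^\gamma\sum_{j\ge 0}\rho^{-\gamma j}=\rho_m^\gamma\cdot\rho^\gamma/(\rho^\gamma-1)$, which reproduces the stated constant exactly.

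For $\gamma>1$ subadditivity fails, so I would split with a weight and apply H\"older's inequality with exponents $\gamma$ and $\gamma'=\gamma/(\gamma-1)$: writing $a_m=(a_m\rho_m^{1/\gamma})\rho_m^{-1/\gamma}$ gives
$$\Bigl(\sum_{m\ge k}a_m\Bigr)^\gamma\le\Bigl(\sum_{m\ge k}a_m^\gamma\rho_m\Bigr)\Bigl(\sum_{m\ge k}\rho_m^{-\gamma'/\gamma}\Bigr)^{\gamma/\gamma'}.$$
The decisive point is the precise choice of the weight exponent $1/\gamma$ (equivalently $\gamma'/\gamma=\gamma'-1$): it is calibrated so that after bounding the second factor by a geometric series, using $\rho_m^{-(\gamma'-1)}\le\rho^{-(\gamma'-1)(m-k)}\rho_k^{-(\gamma'-1)}$, and interchanging the order of summation, the leftover power of $\rho_m$ combines with the $\rho_m$ from the first factor to give exactly $\rho_m^\gamma$, while the remaining sum in $k$ is again geometric. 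Collecting the two geometric factors and simplifying with the identities $(\gamma'-1)(\gamma-1)=1$ and $\gamma/\gamma'=\gamma-1$ produces precisely $\rho^\gamma/[(\rho^{\gamma'-1}-1)^{\gamma-1}(\rho^{\gamma-1}-1)]$.

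Finally, part (b) follows from (a) with essentially no new work. Setting $b_l:=a_{-l}$ and $\varrho_j:=\tau_{-j}$, the hypothesis $\tau:=\sup_k\tau_{k+1}/\tau_k<1$ turns into $\inf_j\varrho_{j+1}/\varrho_j=1/\tau>1$, and the substitution $k\mapsto -j$, $m\mapsto -l$ converts $\sum_k\bigl(\sum_{m\le k}a_m\bigr)^\gamma\tau_k^\gamma$ into $\sum_j\bigl(\sum_{l\ge j}b_l\bigr)^\gamma\varrho_j^\gamma$, to which (a) applies with $\rho=1/\tau$; inserting $\rho=\tau^{-1}$ into the constants of (a) reproduces those in (b). The only delicate part of the whole argument is the bookkeeping of the sharp constant in the range $\gamma>1$, which rests entirely on the calibrated H\"older exponent; everything else is interchange of summation together with summation of geometric series.
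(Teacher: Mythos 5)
The paper does not prove this proposition; it is quoted verbatim from \cite[Proposition 2.1]{GHS}, so there is no in-paper argument to compare against. Your proof is correct: the subadditivity estimate $(\sum b_m)^\gamma\le\sum b_m^\gamma$ plus a geometric tail for $0<\gamma\le 1$, the H\"older split $a_m=(a_m\rho_m^{1/\gamma})\rho_m^{-1/\gamma}$ with the identities $\gamma'/\gamma=\gamma'-1$ and $(\gamma'-1)(\gamma-1)=1$ for $\gamma>1$, and the reflection $k\mapsto-k$ for part (b) all check out, and the constants you obtain, $\rho^{(\gamma'-1)(\gamma-1)+(\gamma-1)}=\rho^\gamma$ over $(\rho^{\gamma'-1}-1)^{\gamma-1}(\rho^{\gamma-1}-1)$, reproduce the stated ones exactly. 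This is the standard argument for such discrete Hardy-type estimates and is, in substance, the proof given in the cited source; the only implicit assumption worth flagging is that the $\rho_k$ are strictly positive so that the ratios $\rho_{k+1}/\rho_k$ and the negative powers $\rho_m^{-(\gamma'-1)}$ make sense, which is also implicit in the statement itself.
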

We start with some auxiliary technical statements.
\begin{lemma}\label{lem1} Let $0\le a<b<\infty$ and $0\le c<d<\infty$.
If $1<p<q<\infty$ then \begin{equation*}
\mathbf{V}_{(a,b)\times(c,d)}:=\int_a^b\int_c^d w(x,y)\Bigl(\int_a^x\int_c^y \sigma\Bigr)^q\,dy\,dx\le \alpha(p,q)
\Bigl(\int_a^b\int_c^d\sigma\Bigr)^{\frac{q}{p}}A^q.
\end{equation*}
For $1<q<p<\infty$ the following inequality holds:
\begin{multline*}
\mathbf{V}_{(a,b)\times(c,d)}\le \beta(p,q)\Bigl(\int_a^b\int_c^d\sigma\Bigr)^{\frac{q}{p}}\\\times\biggl[\int_a^b\int_c^d \chi_{\supp\,w}(x,y)d_y\bigl[I_2\sigma(x,y)\bigr]^{\frac{r}{p'}}\,d_x\Bigl(-\bigl[I_2^\ast w(x,y)\bigr]^{\frac{r}{q}}\Bigr)\biggr]^{\frac{q}{r}}.
\end{multline*}
\end{lemma}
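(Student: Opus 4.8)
The plan is to handle both regimes from a single representation and then split according to the sign of $q-p$. Put $R:=(a,b)\times(c,d)$ and $S(x,y):=\int_a^x\int_c^y\sigma$, so that $\mathbf{V}_{(a,b)\times(c,d)}=\int_R w\,S^q$, and note that $S^q$ vanishes on the edges $\{x=a\}$ and $\{y=c\}$. Recovering $S^q$ from its mixed derivative, $S(x,y)^q=\int_a^x\int_c^y\partial_{st}[S^q]\,ds\,dt$, and noting that $\partial_{st}[S^q]=q(q-1)S^{q-2}(\partial_sS)(\partial_tS)+qS^{q-1}\sigma\ge0$ for $q>1$, Tonelli's theorem together with $\int_s^b\int_t^d w\le I_2^\ast w(s,t)$ gives
$$\mathbf{V}_{(a,b)\times(c,d)}\le\int_R\partial_{st}[S^q](s,t)\,I_2^\ast w(s,t)\,ds\,dt.$$
I will use throughout that $I_2\sigma\ge S$ on $R$.

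For $1<p<q$ I would insert the testing functional: \eqref{n3'} gives $I_2^\ast w(s,t)\le A^q[I_2\sigma(s,t)]^{-q/p'}\le A^q S(s,t)^{-q/p'}$, so with $J_1:=\int_R S^{q/p-2}(\partial_sS)(\partial_tS)$ and $J_2:=\int_R S^{q/p-1}\sigma$ the representation becomes $\mathbf{V}_{(a,b)\times(c,d)}\le A^q[q(q-1)J_1+qJ_2]$. One integration by parts links the two integrals: since $\int_R\partial_{st}[S^{q/p}]=S(b,d)^{q/p}=(\int_R\sigma)^{q/p}$, expanding $\partial_{st}[S^{q/p}]$ yields $J_2=\tfrac pq(\int_R\sigma)^{q/p}-(\tfrac qp-1)J_1$. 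Substituting, the coefficient of $J_1$ collapses to $q^2/p'>0$, and $J_2\ge0$ forces $J_1\le\frac{p^2}{q(q-p)}(\int_R\sigma)^{q/p}$; collecting constants gives precisely $\mathbf{V}_{(a,b)\times(c,d)}\le\frac{p^2(q-1)}{q-p}(\int_R\sigma)^{q/p}A^q=\alpha(p,q)(\int_R\sigma)^{q/p}A^q$. The exact match of the constant is a good check that this is the intended route, and this case is the easy one.

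For $1<q<p$ the functional $A$ is no longer available, and the bracket $\mathcal{B}_R:=\int_R\chi_{\supp w}\,d_y[I_2\sigma]^{r/p'}\,d_x(-[I_2^\ast w]^{r/q})$ must be produced instead. The organizing identity is $\tfrac qp+\tfrac qr=1$ with $\tfrac qr=1-\tfrac qp\in(0,1)$, so the target $\mathbf{V}_{(a,b)\times(c,d)}\lesssim(\int_R\sigma)^{q/p}\mathcal{B}_R^{q/r}$ is a Hölder splitting with conjugate exponents $p/q$ and $r/q$. A single pointwise Hölder cannot generate the dyadic constant $\beta(p,q)$, so I would discretize: partition $R$ by the level sets $E_k:=\{2^k<S\le2^{k+1}\}$, on which $S^q\asymp2^{kq}$, giving $\mathbf{V}_{(a,b)\times(c,d)}\asymp\sum_k2^{kq}\int_{E_k}w$, and then sum the resulting double series by Proposition~\ref{propGHS} with $\gamma=q/r<1$ and geometric ratio $2$. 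That step produces the factor $(2^{q/r}-1)^{-1}$ of $\beta(p,q)$, the remaining powers of $2$ coming from $S\asymp2^k$ on $E_k$ and from the Hölder step, while the case split $2^{q/p-q/r}$ versus $1$ records whether $r/p\ge1$; the increments reassemble into the mixed Stieltjes integral $\mathcal{B}_R$ and the top level contributes $(\int_R\sigma)^{q/p}$.

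The hard part is exactly this discretization. The testing densities $I_2\sigma$ and $I_2^\ast w$ are adapted to rectangles, whereas the super-level sets $\{S>2^k\}\cap R$ are bounded by hyperbola-like curves; replacing each curved region by a controlled staircase of rectangles, so that $\int_{E_k}w$ is dominated by genuine increments $d_x(-[I_2^\ast w]^{r/q})$ while $S$ stays comparable to increments of $[I_2\sigma]^{r/p'}$ (with $\chi_{\supp w}$ absorbing the degenerate cells), is the delicate two-dimensional bookkeeping, and it is where both the precise value of $\beta(p,q)$ and its case split originate. By contrast, the pointwise bound via $A$ lets the $p<q$ case bypass any decomposition, which is why its constant is so clean.
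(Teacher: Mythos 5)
Your treatment of the case $1<p<q$ is correct and is essentially the paper's argument in a cleaner packaging: both proofs recover $S^q$ from its mixed derivative, dominate the tail integral of $w$ by $I_2^\ast w\le A^q[I_2\sigma]^{-q/p'}\le A^qS^{-q/p'}$, and then exploit the exact identity $\frac qp(\frac qp-1)J_1+\frac qpJ_2=(\int_R\sigma)^{q/p}$; your bookkeeping via $J_2\ge0\Rightarrow J_1\le\frac{p^2}{q(q-p)}(\int_R\sigma)^{q/p}$ reproduces $\alpha(p,q)=\frac{p^2(q-1)}{q-p}$ exactly, as does the paper's slightly more roundabout chain.

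For $1<q<p$, however, what you have written is an outline, not a proof, and you say so yourself (``the hard part is exactly this discretization''). The level-set decomposition $\omega_k=\{S>2^k\}$, the use of Proposition~\ref{propGHS}(a) with $\gamma=q/r<1$, the H\"older step with exponents $p/q$ and $r/q$, and the origin of the case split $r/p\gtrless1$ are all correctly identified and match the paper. But the step you defer as ``delicate two-dimensional bookkeeping'' is precisely the one idea that makes the argument close, and your picture of it --- approximating the curved regions $\omega_k$ by a ``staircase of rectangles'' --- is not what is needed. The paper instead proves the \emph{exact} representation
$$\bigl|\omega_k\bigr|_w=\biggl(\int_{\omega_k}\chi_{\supp w}(x,y)\,d_xd_y\Bigl(\int_x^b\int_y^d w\Bigr)^{\frac rq}\biggr)^{\frac qr},$$
obtained by two integrations by parts applied to $w\chi_{\omega_k}$ together with the structural fact that $\omega_k$ is an upper set (if $(x,y)\in\omega_k$ then $[x,b)\times[y,d)\subset\omega_k$, so $\int_x^b\int_y^d w\chi_{\omega_k}=\int_x^b\int_y^d w$ on $\omega_k\cap\supp w$, and the Stieltjes measure $d_xd_y(\int_x^b\int_y^dw\chi_{\omega_k})^{r/q}$ is supported there). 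It is into this identity that one inserts $S^{r-1}\ge2^{k(r-1)}$, splits $S^{r-1}=S^{r/p'}\,S^{r/p-1}$ using $2^k<S\le2^{k+1}$ on $\omega_k\setminus\omega_{k+1}$ (whence the $r/p\gtrless1$ dichotomy in $\beta(p,q)$), and only then applies Proposition~\ref{propGHS} and H\"older over $k$; the factor $(\int_R\sigma)^{q/p}$ comes from $\sum_{k\le K_\sigma}2^k\le2^{K_\sigma+1}\le2\int_R\sigma$ in the H\"older step, not from ``the top level''. Without this representation of $|\omega_k|_w$ there is no mechanism for converting $\int_{E_k}w$ into increments of $-[I_2^\ast w]^{r/q}$ paired with increments of $[I_2\sigma]^{r/p'}$, so the second half of the lemma remains unproven as written.
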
\begin{proof}[Proof] Assume $1<p<q<\infty$ and write
\begin{align*}\mathbf{V}_{(a,b)\times(c,d)}
=&\int_a^b\int_c^d \Bigl(\int_a^x\int_c^y \sigma\Bigr)^q\,d_y\biggl[-\int_y^d w(x,t)\,dt\biggr]dx\\=&q
\int_a^b\int_c^d \Bigl(\int_a^x\int_c^y \sigma\Bigr)^{q-1}\Bigl(\int_a^x\sigma(s,y)\,ds\Bigr)\Bigl(\int_y^d w(x,t)\,dt\Bigr)\,dy\,dx\\=&q
\int_c^d\int_a^b \Bigl(\int_a^x\int_c^y \sigma\Bigr)^{q-1}\Bigl(\int_a^x\sigma(s,y)\,ds\Bigr)\,d_x\Bigl[-\int_x^b\int_y^d w\Bigr]\, dy\\=&q
\int_a^b\int_c^d \biggl\{(q-1)\Bigl(\int_a^x\int_c^y \sigma\Bigr)^{q-2}\Bigl(\int_a^x\sigma(s,y)\,ds\Bigr)\Bigl(\int_c^y\sigma(x,t)\,dt\Bigr)\\&+ \Bigl(\int_a^x\int_c^y \sigma\Bigr)^{q-1}\sigma(x,y)\biggr\}\Bigl(\int_x^b\int_y^d w\Bigr)\,dx\, dy.\end{align*} Then
\begin{multline*}\mathbf{V}_{(a,b)\times(c,d)}\le
qA^q\int_a^b\int_c^d \biggl\{(q-1)\Bigl(\int_a^x\int_c^y \sigma\Bigr)^{\frac{q}{p}-2}\Bigl(\int_a^x\sigma(s,y)\,ds\Bigr)\Bigl(\int_c^y\sigma(x,t)\,dt\Bigr)\\+ \Bigl(\int_a^x\int_c^y \sigma\Bigr)^{\frac{q}{p}-1}\sigma(x,y)\biggr\}\,dx\, dy.\end{multline*}
The assertion of the lemma for the case $ p <q $ follows from the chain of inequalities:
\begin{align*}&
q\int_a^b\int_c^d \biggl\{(q-1)\Bigl(\int_a^x\int_c^y \sigma\Bigr)^{\frac{q}{p}-2}\Bigl(\int_a^x\sigma(s,y)\,ds\Bigr)\Bigl(\int_c^y\sigma(x,t)\,dt\Bigr)\\&+ \Bigl(\int_a^x\int_c^y \sigma\Bigr)^{\frac{q}{p}-1}\sigma(x,y)\biggr\}\,dx\, dy\\=&{
p\int_a^b\int_c^d \biggl\{\frac{q}{p}\Bigl(\frac{q}{p}-1+\frac{q}{p'}\Bigr)\Bigl(\int_a^x\int_c^y \sigma\Bigr)^{\frac{q}{p}-2}\Bigl(\int_a^x\sigma(s,y)\,ds\Bigr)\Bigl(\int_c^y\sigma(x,t)\,dt\Bigr)}\\&+\frac{q}{p} \Bigl(\int_a^x\int_c^y \sigma\Bigr)^{\frac{q}{p}-1}\sigma(x,y)\biggr\}\,dx \,dy\\\le&{
p\int_a^b\int_c^d \biggl\{\frac{q}{p}\Bigl(\frac{q}{p}-1\Bigr)\Bigl(\int_a^x\int_c^y \sigma\Bigr)^{\frac{q}{p}-2}\Bigl(\int_a^x\sigma(s,y)\,ds\Bigr)\Bigl(\int_c^y\sigma(x,t)\,dt\Bigr)}\\&+\frac{q}{p} \Bigl(\int_a^x\int_c^y \sigma\Bigr)^{\frac{q}{p}-1}\sigma(x,y)\biggr\}\,dx \,dy\\&+{
\frac{p^2q^2}{p'q(q-p)}\int_a^b\int_c^d \biggl\{\frac{q}{p}\Bigl(\frac{q}{p}-1\Bigr)\Bigl(\int_a^x\int_c^y \sigma\Bigr)^{\frac{q}{p}-2}\Bigl(\int_a^x\sigma(s,y)\,ds\Bigr)\Bigl(\int_c^y\sigma(x,t)\,dt\Bigr)}\\&+\frac{q}{p} \Bigl(\int_a^x\int_c^y \sigma\Bigr)^{\frac{q}{p}-1}\sigma(x,y)\biggr\}\,dx\, dy\\=&\biggl[p+
\frac{pq(p-1)}{q-p}\biggr]\int_a^b\int_c^d \biggl\{\frac{q}{p}\Bigl(\frac{q}{p}-1\Bigr)\Bigl(\int_a^x\int_c^y \sigma\Bigr)^{\frac{q}{p}-2}\Bigl(\int_a^x\sigma(s,y)\,ds\Bigr)\Bigl(\int_c^y\sigma(x,t)\,dt\Bigr)\\&+\frac{q}{p} \Bigl(\int_a^x\int_c^y \sigma\Bigr)^{\frac{q}{p}-1}\sigma(x,y)\biggr\}\,dx\, dy\\=&
\alpha(p,q)\int_a^b\int_c^d \biggl\{\frac{q}{p}\Bigl(\frac{q}{p}-1\Bigr)\Bigl(\int_a^x\int_c^y \sigma\Bigr)^{\frac{q}{p}-2}\Bigl(\int_a^x\sigma(s,y)\,ds\Bigr)\Bigl(\int_c^y\sigma(x,t)\,dt\Bigr)\\&+\frac{q}{p} \Bigl(\int_a^x\int_c^y \sigma\Bigr)^{\frac{q}{p}-1}\sigma(x,y)\biggr\}\,dx\, dy=\alpha(p,q)\Bigl(\int_a^b\int_c^d\sigma\Bigr)^{\frac{q}{p}}.
\end{align*}

Now suppose that $q<p$.
By analogy with the proof of \cite[Theorem 1A]{Saw1} we define the domains
$$
\omega_k:=\Bigl\{(x,y)\in(a,b)\times(c,d): \int_a^x\int_c^y \sigma>2^k\Bigr\},\qquad -\infty<k\le K_\sigma.
$$
The restriction $ K_\sigma <\infty $ follows from the condition \cite[(1.6)]{Saw1}, which is necessary for any relations between $ p $ and $ q $. Then
\begin{multline*}
\mathbf{V}_{(a,b)\times(c,d)}=\sum_{k\le K_\sigma}\int_{\omega_k\setminus\omega_{k+1}}w(x,y)\Bigl(\int_a^x\int_c^y \sigma\Bigr)^q\,dy\,dx\\
\le
2^q\sum_{k\le K_\sigma}2^{kq}\bigl|\omega_k\setminus\omega_{k+1}\bigr|_w\le
2^q\sum_{k\le K_\sigma}2^{kq}\bigl|\omega_k\bigr|_w,
\end{multline*} where $\bigl|\omega_k\bigr|_w:=\int_{\omega_k}w$.
Denote
$\alpha_k:=\inf\bigl\{x:a\le x\colon w\chi_{\omega_k}(x,y)>0\bigr\}$,
$\beta_k:=\inf\bigl\{y:c\le y\colon w\chi_{\omega_k}(x,y)>0\bigr\}.$ Observe that $\alpha_k>a$ and $\beta_k>c$ and write
\begin{multline*}
\bigl|\omega_k\bigr|_w=\int_{\alpha_k}^b\int_{\beta_k}^d w\chi_{\omega_k}=\biggl(
\int_{\beta_k}^d\,d_y\biggl[-\Bigl(\int_y^d\int_{\alpha_k}^b w\chi_{\omega_k}\Bigr)^{\frac{r}{q}}\biggr]\biggr)^{\frac{q}{r}}\\=\biggl(
\int_{\beta_k}^d\,d_y\biggl\{-\int_{\alpha_k}^b d_x\biggl[-\Bigl(\int_{x}^b\int_y^d w\chi_{\omega_k}\Bigr)^{\frac{r}{q}}\biggr]\biggr\}\biggr)^{\frac{q}{r}}.
\end{multline*}
Since $\Bigl[-\Bigl(\int_{x}^b\int_y^d w\chi_{\omega_k}\Bigr)^{{r}/{q}}\Bigr]'_x=0$ out of the set
$\omega_k\cap{\rm supp}\,w$
for each fixed $y\ge\beta_k$ and, analogously, $\Bigl[-\Bigl(\int_{x}^b\int_y^d w\chi_{\omega_k}\Bigr)^{{r}/{q}}\Bigr]'_y=0$ outside $\omega_k\cap{\rm supp}\,w$ for all $x\ge\alpha_k$, then
\begin{equation*}\bigl|\omega_k\bigr|_w=\biggl(
\int_{\beta_k}^d\int_{\alpha_k}^b\,d_xd_y\Bigl(\int_{x}^b\int_y^d w\chi_{\omega_k}\Bigr)^{\frac{r}{q}}\biggr)^{\frac{q}{r}}=\biggl(
\int_{\omega_k}\chi_{\textrm{supp}\,w}(x,y)\,d_xd_y\Bigl(\int_{x}^b\int_y^d w\Bigr)^{\frac{r}{q}}\biggr)^{\frac{q}{r}}.\end{equation*} Due to the choice of $ \omega_k $,
\begin{multline*}2^{{kq}}\bigl|\omega_k\bigr|_w=2^{{kq}}\biggl(
\int_{\omega_k}\chi_{\textrm{supp}\,w}(x,y)\,d_xd_y\Bigl(\int_x^b\int_y^d w\Bigr)^{\frac{r}{q}}\biggr)^{\frac{q}{r}}\\\le
2^{\frac{kq}{r}}\biggl(
\int_{\omega_k} \chi_{\textrm{supp}\,w}(x,y)\Bigl(\int_a^x\int_c^y\sigma\Bigr)^{{r-1}}\,d_xd_y\Bigl(\int_x^b\int_y^d w\Bigr)^{\frac{r}{q}}\biggr)^{\frac{q}{r}}.\end{multline*} It follows from Proposition \ref{propGHS}(a) with $\rho=2$ and $\gamma=q/r<1$ that \begin{multline*}\sum_{k\le K_\sigma}2^{\frac{kq}{r}}
\biggl(
\int_{\omega_k} \chi_{\textrm{supp}\,w}(x,y)\Bigl(\int_a^x\int_c^y\sigma\Bigr)^{r-1}\,d_xd_y\Bigl(\int_x^b\int_y^d w\Bigr)^{\frac{r}{q}}\biggr)^{\frac{q}{r}}\\=
\sum_{k\le K_\sigma}2^{\frac{kq}{r}}
\biggl(\sum_{m\ge k}
\int_{\omega_m\setminus\omega_{m+1}}\chi_{\textrm{supp}\,w}(x,y) \Bigl(\int_a^x\int_c^y\sigma\Bigr)^{r-1}\,d_xd_y\Bigl(\int_x^b\int_y^d w\Bigr)^{\frac{r}{q}}\biggr)^{\frac{q}{r}}\\\le\frac{2^{\frac{q}{r}}}{2^{\frac{q}{r}}-1}
\sum_{k\le K_\sigma}2^{\frac{kq}{r}}
\biggl(\int_{\omega_k\setminus\omega_{k+1}}\chi_{\textrm{supp}\,w}(x,y) \Bigl(\int_a^x\int_c^y\sigma\Bigr)^{r-1}\,d_xd_y\Bigl(\int_x^b\int_y^d w\Bigr)^{\frac{r}{q}}\biggr)^{\frac{q}{r}}\\\le\frac{\beta(p,q)}{2^{q+\frac{q}{p}}}
\sum_{k\le K_\sigma}2^{\frac{kq}{p}}
\biggl(\int_{\omega_k\setminus\omega_{k+1}}\chi_{\textrm{supp}\,w}(x,y) \Bigl(\int_a^x\int_c^y\sigma\Bigr)^{\frac{r}{p'}}\,d_xd_y\Bigl(\int_x^b\int_y^d w\Bigr)^{\frac{r}{q}}\biggr)^{\frac{q}{r}}.\end{multline*}
Using H\"older's inequality with exponents $ r / q $ and $ p / q $, we obtain
\begin{multline*}
\sum_{k\le K_\sigma}2^{\frac{kq}{p}}
\biggl(\int_{\omega_k\setminus\omega_{k+1}}\chi_{\textrm{supp}\,w}(x,y) \Bigl(\int_a^x\int_c^y\sigma\Bigr)^{\frac{r}{p'}}\,d_xd_y\Bigl(\int_x^b\int_y^d w\Bigr)^{\frac{r}{q}}\biggr)^{\frac{q}{r}}\\\le2^{\frac{q}{p}}2^{\frac{qK_\sigma}{p}}
\biggl(\sum_{k\le K_\sigma}\int_{\omega_k\setminus\omega_{k+1}} \chi_{\textrm{supp}\,w}(x,y)\Bigl(\int_a^x\int_c^y\sigma\Bigr)^{\frac{r}{p'}}\,d_xd_y\Bigl(\int_x^b\int_y^d w\Bigr)^{\frac{r}{q}}\biggr)^{\frac{q}{r}}\\\le 2^{\frac{q}{p}} \Bigl(\int_a^b\int_c^d\sigma\Bigr)^{\frac{q}{p}}
\biggl(\int_a^b\int_c^d \chi_{\textrm{supp}\,w}(x,y)\Bigl(\int_a^x\int_c^y\sigma\Bigr)^{\frac{r}{p'}}\,d_xd_y\Bigl(\int_x^b\int_y^d w\Bigr)^{\frac{r}{q}}\biggr)^{\frac{q}{r}}
.\end{multline*} Since $ r / q> 1 $ and $ r / p '> 1 $, then integrating by parts over the variable $y$ yields
\begin{multline*}
\int_a^b\int_c^d \chi_{\textrm{supp}\,w}(x,y)\Bigl(\int_a^x\int_c^y\sigma\Bigr)^{\frac{r}{p'}}\,d_yd_x\Bigl(\int_x^b\int_y^d w\Bigr)^{\frac{r}{q}}\\=\int_a^b\int_c^d \chi_{\textrm{supp}\,w}(x,y)d_y\Bigl(\int_a^x\int_c^y\sigma\Bigr)^{\frac{r}{p'}}\,d_x\biggl[-\Bigl(\int_x^b\int_y^d w\Bigr)^{\frac{r}{q}}\biggr]\\\le\int_a^b\int_c^d \chi_{\textrm{supp}\,w}(x,y)d_y\bigl[I_2\sigma(x,y)\bigr]^{\frac{r}{p'}}\,d_x\Bigl(-\bigl[I_2^\ast w(x,y)\bigr]^{\frac{r}{q}}\Bigr).\qedhere\end{multline*}
\end{proof} A similar statement holds with the (inner) integral of $ w $.
\begin{lemma}\label{lem2}
Let $0\le a<b<\infty$ and $0\le c<d<\infty$.
If $1<p<q<\infty$ then \begin{equation*}\mathbf{W}_{(a,b)\times(c,d)}:=\int_a^b\int_c^d \sigma(x,y)\Bigl(\int_x^b\int_y^d w\Bigr)^{p'}\,dy\,dx\le \alpha(q',p')
\Bigl(\int_a^b\int_c^dw\Bigr)^{\frac{p'}{q'}}A^{p'}.\end{equation*} In the case $1<q<p<\infty$
\begin{multline*}
\mathbf{W}_{(a,b)\times(c,d)}\le \beta(q',p')
\Bigl(\int_a^b\int_c^dw\Bigr)^{\frac{p'}{q'}}\\\times\biggl[\int_a^b\int_c^d \chi_{\supp\sigma}(x,y)d_y\bigl[I_2\sigma(x,y)\bigr]^{\frac{r}{p'}}\,d_x\Bigl(-\bigl[I_2^\ast w(x,y)\bigr]^{\frac{r}{q}}\Bigr)\biggr]^{\frac{p'}{r}}.
\end{multline*}
\end{lemma}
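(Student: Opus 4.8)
The plan is to derive Lemma~\ref{lem2} as the exact dual of Lemma~\ref{lem1} under the correspondence $(\sigma,w,I_2,I_2^\ast,p,q)\longmapsto(w,\sigma,I_2^\ast,I_2,q',p')$ that interchanges the two weights and the two rectangular operators. Three facts make this work. Since $1/r=1/q-1/p=1/p'-1/q'$, the parameter $r$ is invariant under $(p,q)\mapsto(q',p')$; the functional $A=A_1$ is symmetric in $I_2\sigma$ and $I_2^\ast w$ and is therefore unchanged by the correspondence; and the kernel $d_y[I_2\sigma]^{r/p'}\,d_x(-[I_2^\ast w]^{r/q})$ in the last factor is self-dual, which is an instance of the two integration-by-parts identities recorded after the definition of $B_1$. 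As $p<q\iff q'<p'$, the first branch of Lemma~\ref{lem1} yields the first ($p<q$) assertion of Lemma~\ref{lem2} and the second branch yields the other. Conceptually the duality is realised by the inversion $(x,y)\mapsto(1/x,1/y)$, a bijection of $\mathbb{R}^2_+$ that interchanges $I_2$ and $I_2^\ast$, carries $(a,b)\times(c,d)$ to $(1/b,1/a)\times(1/d,1/c)$, turns $\mathbf{W}$ into a $\mathbf{V}$-integral for the inverted weights, and preserves $A$.

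Concretely I would replay the proof of Lemma~\ref{lem1} with the roles of $\sigma,w$ and of $I_2,I_2^\ast$ interchanged. For $1<p<q<\infty$ (so $q'<p'$) I integrate by parts twice to move both differentiations onto $(\int_x^b\int_y^d w)^{p'}$; the boundary terms vanish because $\int_a^a\int_c^y\sigma=\int_a^x\int_c^c\sigma=0$ and $\int_b^b\int_y^d w=\int_x^b\int_d^d w=0$. This presents $\mathbf{W}_{(a,b)\times(c,d)}$ as the integral of the undifferentiated factor $\int_a^x\int_c^y\sigma$ against the two positive terms produced by $\partial_x\partial_y(\int_x^b\int_y^d w)^{p'}$. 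I then bound that factor by the dual reading of condition $A$, namely $\int_a^x\int_c^y\sigma\le I_2\sigma\le A^{p'}[I_2^\ast w]^{-p'/q}\le A^{p'}(\int_x^b\int_y^d w)^{-p'/q}$, the two monotone extensions pointing in compatible directions. The powers then reduce to $p'/q'-2$ and $p'/q'-1$, and the factor $A^{p'}$ comes out; the remaining total-derivative integrand telescopes, by the same algebra as in Lemma~\ref{lem1} with $(p,q)$ replaced by $(q',p')$, with constant $\alpha(q',p')$. Since $\int_a^b\int_c^d\partial_x\partial_y(\int_x^b\int_y^d w)^{p'/q'}=(\int_a^b\int_c^d w)^{p'/q'}$, this gives $\mathbf{W}\le\alpha(q',p')(\int_a^b\int_c^d w)^{p'/q'}A^{p'}$, the first claim.

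For $1<q<p<\infty$ (so $q'>p'$) I would mirror Sawyer's decomposition, using the level sets $\omega_k':=\{(x,y)\in(a,b)\times(c,d):\int_x^b\int_y^d w>2^k\}$, $k\le K_w<\infty$, reducing to $\mathbf{W}\le2^{p'}\sum_k2^{kp'}|\omega_k'|_\sigma$, and expressing $|\omega_k'|_\sigma$ by integration by parts in the $I_2$-direction as $(\int_{\omega_k'}\chi_{\supp\sigma}\,d_xd_y(\int_a^x\int_c^y\sigma)^{r/p'})^{p'/r}$. Inserting the factor $(\int_x^b\int_y^d w)^{r-1}$ through the bound $2^k<\int_x^b\int_y^d w$ on $\omega_k'$ (legitimate because $r>q>1$), I apply Proposition~\ref{propGHS}(a) with $\rho=2$ and $\gamma=p'/r<1$ (here $r>p'$), pass back on the annuli $\omega_k'\setminus\omega_{k+1}'$ via $2^k<\int_x^b\int_y^d w\le2^{k+1}$, and apply H\"older with the conjugate exponents $r/p'$ and $q'/p'$ (which satisfy $p'/r+p'/q'=1$) to extract $(\int_a^b\int_c^d w)^{p'/q'}$. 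The residual mixed Stieltjes integral is then carried over to the global quantities exactly as at the end of Lemma~\ref{lem1}, giving the stated bracket with $\chi_{\supp\sigma}$ and outer power $p'/r$. The main obstacle is the constant bookkeeping in this case: one must verify that the geometric factor from Proposition~\ref{propGHS}(a), the powers of $2$ generated on the annuli, and the dichotomy $r/q'\gtrless1$ (which is exactly the case split in $\beta(q',p')$) assemble into $\beta(q',p')$; and, as in Lemma~\ref{lem1}, that the final integration by parts is valid on $\supp\sigma$, so that the $d_xd_y$-form may be rewritten in the self-dual $d_y[\,\cdot\,]^{r/p'}\,d_x(-[\,\cdot\,]^{r/q})$ form and then majorised by replacing the local corner integrals with $I_2\sigma$ and $I_2^\ast w$.
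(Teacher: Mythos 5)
Your proposal is correct and matches the paper's approach: the paper gives no separate proof of Lemma \ref{lem2}, stating only that "a similar statement holds," and your $(\sigma,w,I_2,I_2^\ast,p,q)\mapsto(w,\sigma,I_2^\ast,I_2,q',p')$ duality together with the mirrored replay of Lemma \ref{lem1} is exactly that intended argument, with the parameter checks ($r$ invariant, $A$ invariant, $p<q\iff q'<p'$) done correctly. The only point needing the care you already flag is that on a finite rectangle the dualized kernel appears in the transposed form $d_x[I_2\sigma]^{r/p'}d_y(-[I_2^\ast w]^{r/q})$, so one final integration by parts in the appropriate variable is required to land on the kernel exactly as stated.
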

Introduce notations: $\alpha:=\alpha(p,q)$, $\beta:=\beta(p,q)$, $\alpha':=\alpha(q',p')$, $\beta':=\beta(q',p')$, $$\mathbb{C}_{\alpha,\alpha'}:=3^{3q}\Bigl[\Bigl(\frac{2}{3}\Bigr)^{q}\max\Bigl\{\alpha,2q(q')^{\frac{q}{p'}}\Bigr\}\Bigl(\frac{2^{p-1}}{2^{p-1}-1}\Bigr)^{\frac{q}{p}}+3^{\frac{1}{p}}(\alpha')^{\frac{1}{p'}}\Bigl(\frac{3^{q-1}}{3^{q-1}-1}\Bigr)^{\frac{1}{q'}}\Bigr],
$$ $$
\mathbf{C}_{\beta,\beta'}:=3^{3q}\Bigl[\Bigl(\frac{2}{3}\Bigr)^{q}\max\Bigl\{\beta,2q(p')^{q-1}\Bigl(\frac{q}{r}\Bigr)^{\frac{q}{r}}\Bigr\}
\Bigl(\frac{2^{p-1}}{2^{p-1}-1}\Bigr)^{\frac{q}{p}}\\+3(\beta')^{\frac{1}{p'}}\Bigl(\frac{3^{q-1}}{3^{q-1}-1}\Bigr)^{\frac{1}{q'}}\Bigr].$$

The main result of the work is the following statement.

\begin{theorem}\label{aga}
Let $1<p\not=q<\infty$. If $p<q$ then the inequality
\begin{equation}\label{aux}
\biggl(\int_{\mathbb{R}^2_+}\bigl(I_2f\bigr)^q
w\biggr)^{\frac{1}{q}}\le C_2
\biggl(\int_{\mathbb{R}^2_+}f^pv\biggr)^{\frac{1}{p}}\qquad(f\ge 0)
\end{equation}
holds if and only if $A<\infty.$ Besides,
$$
A\le C_2\le\mathbb{C}_{\alpha,\alpha'}\,A.
$$
In the case $q<p$ the inequality \eqref{aux} is true if and only if
$B<\infty.$ Moreover, \begin{equation*}2^{-\frac{1}{p'}}\Bigl(\frac{q}{r}\Bigr)^{\frac{1}{q}}\Bigl(\frac{p'}{r}\Bigr)^{\frac{1}{p'}}B\le  C_2\le\mathbf{C}_{\beta,\beta'}\,B.
\end{equation*}
\end{theorem}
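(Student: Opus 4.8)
The plan is to treat necessity and sufficiency separately and, within each, to let the relation between $p$ and $q$ dictate only which half of Lemmas~\ref{lem1} and~\ref{lem2} is invoked. The parallel shape of the constants $\mathbb{C}_{\alpha,\alpha'}$ and $\mathbf{C}_{\beta,\beta'}$ already signals that one mechanism underlies both regimes, with the pair $(\alpha,\alpha')$ replaced by $(\beta,\beta')$ and the functional $A$ by $B$. Throughout I would first record the normalization $v=\sigma^{1-p}$ (equivalently $\sigma^{p}v=\sigma$), which follows from $\sigma=v^{1-p'}$ and lets one write $f=\phi\sigma$ so that $\int f^p v=\int\phi^p\sigma$ and $I_2 f=I_2(\phi\sigma)$.

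\textbf{Necessity.} The lower bound $A\le C_2$ holds for every $p,q$ and comes from the standard test function $f=\sigma\,\chi_{(0,t_1)\times(0,t_2)}$ in \eqref{aux}: the right-hand side equals $C_2\,[I_2\sigma(t_1,t_2)]^{1/p}$, while on the quadrant $\{x>t_1,\ y>t_2\}$ one has $I_2 f\equiv I_2\sigma(t_1,t_2)$, so the left-hand side dominates $[I_2\sigma(t_1,t_2)]\,[I_2^\ast w(t_1,t_2)]^{1/q}$; rearranging and taking the supremum gives \eqref{n3'}. For $q<p$ the bound $2^{-1/p'}(q/r)^{1/q}(p'/r)^{1/p'}B\le C_2$ is the genuinely new direction. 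Here I would discretize along the level sets $\Omega_k=\{(x,y):I_2\sigma(x,y)>2^k\}$ and test \eqref{aux} against a superposition of the localized densities $\sigma\,\chi_{\Omega_k\setminus\Omega_{k+1}}$ with coefficients calibrated so that, after one application of \eqref{aux}, H\"older's inequality with exponent $r$ reassembles exactly the Stieltjes integral defining $B$; the explicit numerical constant then emerges from the dyadic factor and the summation in Proposition~\ref{propGHS}.

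\textbf{Sufficiency.} After the reduction above, \eqref{aux} reads $\bigl(\int(I_2(\phi\sigma))^q w\bigr)^{1/q}\le C\bigl(\int\phi^p\sigma\bigr)^{1/p}$, and I would run the Sawyer-type staircase decomposition on $u:=I_2\sigma$. Since $u$ is nondecreasing in each variable, each $\Omega_k=\{u>2^k\}$ has a decreasing staircase boundary, and I split $\int(I_2(\phi\sigma))^q w=\sum_k\int_{\Omega_k\setminus\Omega_{k+1}}(I_2(\phi\sigma))^q w$. On each annulus the contribution is controlled by a \emph{local} rectangular Hardy term to which Lemma~\ref{lem1} applies, with its dual form Lemma~\ref{lem2} handling the adjoint piece produced by duality. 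For $p<q$ the two lemmas return the factors $\alpha\,(\int\sigma)^{q/p}A^q$ and $\alpha'\,(\int w)^{p'/q'}A^{p'}$; for $q<p$ the same two lemmas instead return $\beta\,(\cdots)^{q/r}$ and $\beta'\,(\cdots)^{p'/r}$ built from the integrand of $B$. Summing the annular estimates by Proposition~\ref{propGHS} (both parts, with the geometric ratios arranged to produce the factors $\tfrac{2^{p-1}}{2^{p-1}-1}$ and $\tfrac{3^{q-1}}{3^{q-1}-1}$ visible in the two constants) collapses the sum into $\mathbb{C}_{\alpha,\alpha'}A$ for $p<q$ and $\mathbf{C}_{\beta,\beta'}B$ for $q<p$. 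For $p<q$ one may alternatively note that Lemmas~\ref{lem1} and~\ref{lem2}, applied on the corner rectangles $(0,t_1)\times(0,t_2)$ and $(t_1,\infty)\times(t_2,\infty)$, give $A_2\lesssim A$ and $A_3\lesssim A$, so Theorem~\ref{ES} yields \eqref{aux} at once; but that route loses the explicit constant, so I would keep the direct discretization.

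\textbf{Main obstacle.} The hard part will be the sufficiency bookkeeping in the genuinely two-dimensional geometry: unlike $n=1$, the lower-left rectangle $(0,x)\times(0,y)$ defining $I_2(\phi\sigma)(x,y)$ meets many annuli $\Omega_j\setminus\Omega_{j+1}$, so the contributions do not localize cleanly and one must cover each annulus by rectangles whose corners ride on the staircase boundary and then control the overlaps. This is precisely the obstruction responsible for the failure of any two of \eqref{n3'}--\eqref{n5'} at $p=q$, and the point where Lemmas~\ref{lem1}--\ref{lem2} (supplying corner-anchored local control) must be matched exactly to Proposition~\ref{propGHS} (supplying the geometric summation) so that the single functional $A$, respectively $B$, absorbs all three Sawyer quantities simultaneously.
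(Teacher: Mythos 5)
Your treatment of the easy necessity half ($A\le C_2$ via $f=\sigma\chi_{(0,t_1)\times(0,t_2)}$, using $\sigma^pv=\sigma$) is correct and matches the paper, and your alternative route to sufficiency for $p<q$ — apply Lemmas \ref{lem1}--\ref{lem2} on the corner rectangles to get $A_2\lesssim A$ and $A_3\lesssim A$ and then invoke Theorem \ref{ES} — is legitimate; the paper records exactly this as \eqref{AAA}. The two remaining pieces, however, contain genuine gaps. For sufficiency (unavoidable when $q<p$, where Theorem \ref{ES} is not available) you propose to decompose $\int(I_2(\phi\sigma))^qw$ over the level sets of $u:=I_2\sigma$. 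This is the wrong function: on an annulus $\{2^k<I_2\sigma\le 2^{k+1}\}$ there is no pointwise control of $I_2(\phi\sigma)$, so the annular pieces are not ``local rectangular Hardy terms,'' and Lemma \ref{lem1} does not apply to them — it bounds $\int w\,(I_2\sigma)^q$, not $\int w\,(I_2(\phi\sigma))^q$. The paper, following Sawyer, decomposes over the level sets $\Omega_k=\{I_2f>3^k\}$ of the output itself, covers $\Omega_{k+1}$ by corner rectangles riding on the staircases $\partial\Omega_k$ and $\partial\Omega_{k+1}$, splits the sum into a far part $I$ and a near part $II$, runs a \emph{second} dyadic decomposition over the averages $|R_j^k|_\sigma^{-1}\int_{R_j^k}g\sigma$ with maximal rectangles $U_i^l$, and absorbs $II$ into the left-hand side through a bound of the form $II\lesssim B\,(\int f^pv)^{1/p}(\int(I_2f)^qw)^{1/q'}$. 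All of this is precisely what you defer to your ``main obstacle'' paragraph; but that combinatorial core \emph{is} the proof, and your sketch supplies none of it.

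The lower bound $2^{-1/p'}(q/r)^{1/q}(p'/r)^{1/p'}B\le C_2$ for $q<p$ is also not established. You propose testing \eqref{aux} against a superposition of the densities $\sigma\chi_{\Omega_k\setminus\Omega_{k+1}}$ (with $\Omega_k$ again level sets of $I_2\sigma$) with scalar coefficients. Such test functions carry information only about $\sigma$ and the level structure of $I_2\sigma$, whereas $B^r=\int_{\mathbb{R}^2_+}[I_2\sigma]^{r/p'}\,d_xd_y[I_2^\ast w]^{r/q}$ is a mixed Stieltjes integral coupling $I_2\sigma$ and $I_2^\ast w$ pointwise; the measure $d_xd_y[I_2^\ast w]^{r/q}$ bears no a priori relation to those level sets, and you never specify the ``calibration'' of coefficients that would make H\"older with exponent $r$ reassemble $B$. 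The paper instead uses a single explicit test function $f=\sigma J^{1/p}$ with $J(s,y)=\int_s^\infty[I_2\sigma(x,y)]^{r/q'}[I_2^\ast w(x,y)]^{r/p}\bigl(\int_y^\infty w(x,t)\,dt\bigr)dx$, for which Fubini and integration by parts give $\int f^pv=\frac{p'q}{r^2}B^r$, and then bounds $\int(I_2f)^qw$ from below by a multiple of $B^r$ via the pointwise estimate $I_2f(u,z)\gtrsim[I_2\sigma(u,z)]^{\frac{r}{qp'}}[I_2^\ast w(u,z)]^{\frac{r}{qp}}$ and a further integration by parts; this is where the explicit constant comes from. Without a test function that sees both weights simultaneously, this half of the theorem is out of reach.
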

\begin{proof}[Proof]
(\textit{Sufficiency}) Similarly to how it was done in E. Sawyer's paper \cite{Saw1} for the case $1<p\le q<\infty$, we show that the conditions of the theorem are sufficient,
limiting ourselves to proving the inequality \eqref{aux} on the subclass
$M\subset L^p_v(\mathbb{R}^2_+)$ of all functions $f\ge
0$ bounded on $\mathbb{R}^2_+$ with compact supports contained in the set $\{I_2\sigma>0\}$. Then the inequality \eqref{aux} for arbitrary $ 0 \leq f \in L^p_v (\mathbb {R}^2_+) $ follows by the standard arguments.

Suppose $ A <\infty $ for $ p <q $ (or $ B <\infty $ in the case of $ q <p $) and fix $ f \in M $. By analogy with the proof of \cite[Theorem 1A]{Saw1}, we define the domains
$$
\Omega_k\colon=\left\{I_2f>3^{k}\right\},\qquad k\in\mathbb{Z}.
$$
Then, by our assumptions on $f$, there exists $K\in\mathbb{Z}$ such that $\Omega_k\not=\varnothing$ for $k\leq K,$ $\Omega_k=\varnothing$ for $k>K$,  $\bigcup_{k\in\mathbb{Z}}\Omega_{k}=\mathbb{R}^2_+$  and
\begin{equation*}\label{sv}
3^{k}< I_2f(x,y)\leq 3^{k+1},\qquad k\leq K,\qquad
(x,y)\in\left(\Omega_{k}\setminus\Omega_{k+1}\right).
\end{equation*}

{\setlength{\unitlength}{0.0105in}{
\begin{picture}(350,350)
\put(10,10){\vector(0,1){320}}
\put(10,10){\vector(1,0){380}}

{\linethickness{.36mm}
\qbezier(40,270)(80,45)(300,25)
\qbezier(70,320)(130,60)(380,35)}

\put(60,0){\small $x_1^k$}
\put(120,0){\small $x_2^k$}
\put(200,0){\small $x_3^k$}

\put(-3,43){\small $y_3^k$}
\put(-3,94){\small $y_2^k$}
\put(-3,182){\small $y_1^k$}

\put(168,150){$\widetilde{\bf S}_{\boldsymbol{2}}^{\boldsymbol{k}}$}
\put(170,73){$\widetilde{\bf R}_{\boldsymbol{2}}^{\boldsymbol{k}}$}
\put(280,180){${\bf T}_{\boldsymbol{2}}^{\boldsymbol{k}}$}

{\linethickness{.0001mm}
\put(124,47){\line(1,1){50}}\put(132,47){\line(1,1){50}}\put(140,47){\line(1,1){50}}\put(148,47){\line(1,1){50}}\put(156,47){\line(1,1){48}}\put(164,47){\line(1,1){40}}
\put(172,47){\line(1,1){32}}\put(180,47){\line(1,1){25}}
\put(188,47){\line(1,1){16}}\put(195,47){\line(1,1){10}}

\put(124,55){\line(1,1){42}}\put(124,63){\line(1,1){34}}\put(124,71){\line(1,1){26}}\put(124,79){\line(1,1){18}}\put(124,87){\line(1,1){10}} }

{\linethickness{.0001mm}
\put(122,184){\line(1,-1){80}}\put(130,184){\line(1,-1){72}}\put(138,184){\line(1,-1){64}}\put(146,184){\line(1,-1){56}}\put(154,184){\line(1,-1){48}}\put(162,184){\line(1,-1){40}}
\put(170,184){\line(1,-1){32}}\put(178,184){\line(1,-1){25}}
\put(186,184){\line(1,-1){16}}\put(193,184){\line(1,-1){10}}

\put(124,174){\line(1,-1){77}}\put(124,166){\line(1,-1){68}}\put(124,158){\line(1,-1){60}}\put(124,150){\line(1,-1){52}}\put(124,142){\line(1,-1){44}}\put(124,134){\line(1,-1){36}}
\put(124,126){\line(1,-1){28}}\put(124,118){\line(1,-1){20}}
\put(124,110){\line(1,-1){12}} }

{\linethickness{.0001mm}
\put(205,107){\line(1,-1){10}}\put(205,117){\line(1,-1){20}}\put(205,127){\line(1,-1){30}}\put(205,137){\line(1,-1){40}}\put(205,147){\line(1,-1){50}}\put(205,157){\line(1,-1){60}}
\put(205,167){\line(1,-1){70}}\put(205,177){\line(1,-1){80}}
\put(205,187){\line(1,-1){90}}\put(205,197){\line(1,-1){100}}
\put(205,207){\line(1,-1){110}}\put(205,217){\line(1,-1){120}}\put(205,227){\line(1,-1){130}}\put(205,237){\line(1,-1){140}}
\put(205,247){\line(1,-1){150}}\put(205,257){\line(1,-1){160}}
\put(205,267){\line(1,-1){170}}\put(205,277){\line(1,-1){170}}\put(205,287){\line(1,-1){170}}\put(205,297){\line(1,-1){170}}
\put(205,307){\line(1,-1){170}}\put(205,317){\line(1,-1){170}}
\put(205,327){\line(1,-1){170}}
}



\put(65,10){\line(0,1){315}}

\put(10,184){\line(1,0){114}}
\multiput(124,184)(8.5,0){10}{\line(1,0){5}}

\put(124,10){\line(0,1){175}}
\multiput(124,185)(0,8){18}{\line(0,1){5}}

\put(10,97.3){\line(1,0){198}}

\put(204,10){\line(0,1){174}}
\multiput(204,184)(0,8){18}{\line(0,1){5}}

\put(10,47){\line(1,0){300}}

\multiput(310,47)(8.5,0){7}{\line(1,0){5}}

\multiput(210,97.3)(8.5,0){20}{\line(1,0){5}}

\put(385,38){\small $\partial\Omega_{k+1}$}
\put(310,25){\small $\partial\Omega_{k}$}

\put(203,-15){\scriptsize\rm Fig. 1}
\end{picture}}}\vspace{8mm}

We can write down that \begin{equation*}\label{10-mult}\int_{\mathbb{R}^2_+}
(I_2f)^qw=\sum_{k\le K-2}\int_{\Omega_{k+2}\setminus\Omega_{k+3}}
(I_2f)^qw\le3^{3q}\sum_{k\le K-2}
3^{kq}\left|\Omega_{k+2}\setminus\Omega_{k+3}\right|_{
w},\end{equation*}
where
$\left|\Omega_{k+2}\setminus\Omega_{k+3}\right|_{w}:=\int_{\Omega_{k+2}\setminus\Omega_{k+3}}w$ and $\Omega_{K}\setminus\Omega_{K+1}=\Omega_K$, since $\Omega_{K+1}$ is empty.

Next, as in the proof of \cite[Theorem 1A]{Saw1}, we introduce rectangles. For this, we fix $ k $ such that $\Omega_{k+1}\not=\varnothing$, and choose points
$(x_j^k,y_j^k),$ $1\le j\le N=N_k,$ lying on the boundary $\partial\Omega_k$
in such a way to have $(x_{j}^k,y_{j-1}^k)$ belonging to $\partial\Omega_{k+1}$
for $2\le j\le N$ and $\Omega_{k+1}\subset\bigcup_{j=1}^NS_j^k,$
where $S_j^k$ is a rectangle of the form  $(x_j^k,\infty)\times(y_j^k,\infty)$. We also define rectangles $\widetilde{S}_{j}^k=(x_{j}^k,x_{j+1}^k)\times(y_{j}^k,y_{j-1}^k)$
for $1\le j\le N$ and $R_j^k=(0,x_{j+1}^k)\times(0,y_{j}^k)$, $\widetilde{R}_j^k=(x_j^k,x_{j+1}^k)\times(y_{j+1}^k,y_j^k)$ and $T_j^k=(x_{j+1}^k,\infty)\times(y_{j}^k,\infty)$ for $1\le j\le N-1$. Put $y_0^k=x_{N+1}^k=\infty$ (see Figure 1). 

Now we choose the sets $ E_j^k \subset T_j^k $ so that
$E_j^k\cap E_i^k=\varnothing$ for $j\not= i$ and
$\bigcup_jE_j^k=\left(\Omega_{k+2}\setminus\Omega_{k+3}\right)\cap\left(\bigcup_jT_j^k\right).$
Since
$\Omega_{k+2}\setminus\Omega_{k+3}\subset\Omega_{k+1}\subset\left(\bigcup_jT_j^k\right)\cup\left(\bigcup_j\widetilde{S}_j^k\right),$ then
\begin{eqnarray}\label{20}
3^{-3q}\int_{\mathbb{R}^2_+} (I_2f)^qw\le\sum_{k,j}
3^{kq}\big|E_j^k\big|_w+\sum_{k,j}
3^{kq}\big|\widetilde{S}_j^k\cap(\Omega_{k+2}-\Omega_{k+3})\big|_w=:I+II.\end{eqnarray}

To estimate $II$ we denote $D_j^k:=\widetilde{S}_j^k\setminus\Omega_{k+3}$ and turn to the reasoning of E. Sawyer on page 6 in \cite{Saw1}, from which it follows that \begin{equation*}\label{ub1}I_2(\chi_{D_j^k} f)(x,y)>3^k\quad\textrm{if}\quad (x,y)\in\widetilde{S}_j^k\cap(\Omega_{k+2}\setminus\Omega_{k+3}).\end{equation*}
Further, according to \cite[p. 6]{Saw1},
\begin{align}\label{PQ1}
\big|\widetilde{S}_j^k\cap(\Omega_{k+2}\setminus\Omega_{k+3})\big|_w&\le 3^{-k}\int_{\widetilde{S}_j^k\cap(\Omega_{k+2}\setminus\Omega_{k+3})} I_2(\chi_{D_j^k} f)(x,y)w(x,y)\,dxdy\nonumber\\&\le 3^{-k}\int_{D_j^k} \Bigl(\int_{x_j^k}^x\int_{y_j^k}^yf\Bigr)w(x,y)\,dxdy\nonumber\\&=3^{-k}\int_{D_j^k} f(s,t)\Bigl(\int_s^\infty\int_t^\infty w\chi_{D_j^k}\Bigr)\,dsdt\nonumber\\&\le 3^{-k}\biggl(\int_{D_j^k}f^pv\biggr)^{\frac{1}{p}} \biggl(\int_{D_j^k}\sigma(s,t)\Bigl(\int_s^\infty\int_t^\infty w\chi_{D_j^k}\Bigr)^{p'}\,dsdt\biggr)^{\frac{1}{p'}}.\end{align}
By applying Lemma \ref{lem2} to $ (a, b) \times (c, d) = \widetilde{S}_j^k $, we obtain for $ p <q $ that
\begin{equation}\label{PQ2}\mathbf{W}_{\widetilde{S}_j^k}=\int_{D_j^k}\sigma(s,t)\Bigl(\int_s^\infty\int_t^\infty w\chi_{D_j^k}\Bigr)^{p'}\,dsdt\le\alpha' A^{p'}\bigl|\widetilde{S}_j^k\bigr|_w^{\frac{p'}{q'}},\end{equation}
and in the case $q<p$ 
$$\mathbf{W}_{\widetilde{S}_j^k}\le\beta' \bigl|\widetilde{S}_j^k\bigr|_w^{\frac{p'}{q'}}\biggl(\int_{D_j^k}d_y\bigl[I_2\sigma(x,y)\bigr]^{\frac{r}{p'}}\,d_x\Bigl(-\bigl[I_2^\ast w(x,y)\bigr]^{\frac{r}{q}}\Bigr)\biggr)^{\frac{p'}{r}}.$$
For $ q <p $, from this and H\"older's inequality with $ q $ and $ q '$,
\begin{align*}(\beta')^{-\frac{1}{p'}}\cdot II\le\sum_{k,j}3^{k(q-1)}
\biggl(\int_{D_j^k}f^pv\biggr)^{\frac{1}{p}}
\biggl(\int_{D_j^k}d_y\bigl[I_2\sigma(x,y)\bigr]^{\frac{r}{p'}}\,d_x\Bigl(-\bigl[I_2^\ast w(x,y)\bigr]^{\frac{r}{q}}\Bigr)\biggr)^{\frac{1}{r}}\bigl|S_j^k\bigr|_w^{\frac{1}{q'}}\\\le\biggl(\sum_{k,j}3^{kq}\bigl|S_j^k\bigr|_w\biggr)^{\frac{1}{q'}}\Biggl[\sum_{k,j}
\biggl(\int_{D_j^k}f^pv\biggr)^{\frac{q}{p}}
\Biggl(\int_{D_j^k}d_y\bigl[I_2\sigma(x,y)\bigr]^{\frac{r}{p'}}\,d_x\Bigl(-\bigl[I_2^\ast w(x,y)\bigr]^{\frac{r}{q}}\Bigr)\Biggr)^{\frac{q}{r}}\Biggr]^{\frac{1}{q}}.
\end{align*}
On the strength of \cite[(2.6)]{Saw1}
$$\sum_{j=1}^{N_k}\chi_{S_j^k}\le 3^{-k}\chi_{\Omega_k}I_2f\qquad\textrm{for all} \ k.$$ Then \begin{multline*}
\sum_{k,j}3^{kq}\bigl|S_j^k\bigr|_w=\sum_{k}3^{kq}\sum_{j=1}^{N_k}\int_{\mathbb{R}_+^2}\chi_{S_j^k}w=\sum_{k}3^{kq}\int_{\mathbb{R}_+^2}\Bigl(\sum_{j=1}^{N_k}\chi_{S_j^k}\Bigr)w\le\sum_{k}3^{k(q-1)}\int_{\mathbb{R}_+^2}\chi_{\Omega_k}(I_2f)w\\=
\sum_{k}3^{k(q-1)}\sum_{m\ge k}\int_{\mathbb{R}_+^2}\chi_{\Omega_m\setminus\Omega_{m+1}}(I_2f)w=\sum_{m}3^{m(q-1)}\int_{\mathbb{R}_+^2}\chi_{\Omega_m\setminus\Omega_{m+1}}(I_2f)w\sum_{m\ge k}3^{(k-m)(q-1)}\\\le\frac{3^{q-1}}{3^{q-1}-1}\sum_{m}3^{m(q-1)}\int_{\mathbb{R}_+^2}\chi_{\Omega_m\setminus\Omega_{m+1}}(I_2f)w
\end{multline*} and, therefore,
$$
\sum_{k,j}3^{kq}\bigl|S_j^k\bigr|_w\le\frac{3^{q-1}}{3^{q-1}-1}\sum_{m}\int_{{\Omega_m\setminus\Omega_{m+1}}}\bigl(I_2f\bigr)^qw=\frac{3^{q-1}}{3^{q-1}-1}\int_{\mathbb{R}_+^2}\bigl(I_2f\bigr)^qw.
$$
Further, H\"older's inequality with $ p / q $, $ r / q $ and the estimate
$ \sum_{k, j} \chi_ {D_j^k} \le \sum_k \chi_{\Omega_k \setminus \Omega_{k + 3}} \le 3 $ entail
\begin{multline*}\sum_{k,j}
\biggl(\int_{D_j^k}f^pv\biggr)^{\frac{q}{p}}
\biggl(\int_{D_j^k}d_y\bigl[I_2\sigma(x,y)\bigr]^{\frac{r}{p'}}\,d_x\Bigl(-\bigl[I_2^\ast w(x,y)\bigr]^{\frac{r}{q}}\Bigr)\biggr)^{\frac{q}{r}}\\
\le\biggl(\sum_{k,j}
\int_{D_j^k}f^pv\biggr)^{\frac{q}{p}}\Biggl(\sum_{k,j}
\int_{D_j^k}d_y\bigl[I_2\sigma(x,y)\bigr]^{\frac{r}{p'}}\,d_x\Bigl(-\bigl[I_2^\ast w(x,y)\bigr]^{\frac{r}{q}}\Bigr)\biggr)^{\frac{q}{r}}\\\le 3\biggl(\int_{\mathbb{R}_+^2}f^pv\biggr)^{\frac{q}{p}}\Biggl(\int_{\mathbb{R}_+^2}d_y\bigl[I_2\sigma(x,y)\bigr]^{\frac{r}{p'}}\,d_x\Bigl(-\bigl[I_2^\ast w(x,y)\bigr]^{\frac{r}{q}}\Bigr)\Biggr)^{\frac{q}{r}}.\end{multline*} Thus, for $ q <p $, \begin{equation}\label{II}II\le3(\beta')^{\frac{1}{p'}}B\Bigl(\frac{3^{q-1}}{3^{q-1}-1}\Bigr)^{\frac{1}{q'}}\biggl(\int_{\mathbb{R}_+^2}f^pv\biggr)^{\frac{1}{p}}
\biggl(\int_{\mathbb{R}_+^2}\bigl(I_2 f\bigr)^qw\biggr)^{\frac{1}{q'}}.\end{equation} In the case $ p <q $ a similar estimate of the form \begin{equation}\label{IIA}II\le3^{\frac{1}{p}}(\alpha')^{\frac{1}{p'}} A\Bigl(\frac{3^{q-1}}{3^{q-1}-1}\Bigr)^{\frac{1}{q'}}\biggl(\int_{\mathbb{R}_+^2}f^pv\biggr)^{\frac{1}{p}}
\biggl(\int_{\mathbb{R}_+^2}\bigl(I_2 f\bigr)^qw\biggr)^{\frac{1}{q'}}\end{equation} follows from \eqref{PQ1}, \eqref{PQ2} and the reasoning on pages 6--7 in \cite{Saw1}.

To estimate $ I $ in \eqref{20}, in full accordance with the proof of \cite[Theorem 1A, pp. 8--9]{Saw1}, we put $ g \sigma: = f $ and write:
\begin{equation}\label{100} 3^{q}I=\sum_{k,j}
3^{(k+1)q}\big|{E}_j^k\big|_w=
\sum_{k,j} \bigl|{E}_j^k\bigr|_w\biggl(\int_{R_j^k}f\biggr)^q=\sum_{k,j}
\bigl|{E}_j^k\bigr|_w\bigl|R_j^k\bigr|_{\sigma}^q\biggl(\frac{1}{\bigl|R_j^k\bigr|_{\sigma}}
\int_{R_j^k}g\sigma\biggr)^q.
\end{equation}
For an integer $ l $, by $ \Gamma_l $ we denote the set of pairs $ (k, j) $ such that
$\bigl|{E}_j^k\bigr|_w>0$ and
\begin{eqnarray*}\label{110}
2^l<\frac{1}{\bigl|R_j^k\bigr|_{\sigma}}
\int_{R_j^k}g\sigma\leq 2^{l+1},\qquad(k,j)\in\Gamma_l
\end{eqnarray*}
and observe that $\Gamma_{l^\prime}\cap\Gamma_{l^{\prime\prime}}=\varnothing,$ $l^{\prime}\not=l^{\prime\prime}.$

For fixed $ l $ the family $ \{U_i^l \}_{i = 1}^{i (l)} $ consists of maximal rectangles from the collection $ \{R_j^k \}_{(k, j) \in \Gamma_l} $, that is, each $ R_j^k $ with $ (k, j) \in\Gamma_l $ is contained in some $ U_i^l $ (or coincides with it). In \cite[p. 8]{Saw1} it is shown that $\widetilde{U}_i^l$ are disjoint for fixed $ l $, where we denote  $\widetilde{U}_i^l=\widetilde{R}_i^l$ if  ${U}_i^l={R}_i^l$.

Let $\chi_i^l$ be the characteristic function of the union of the sets $E_j^k$ over all $(k,j)\in\Gamma_l$ such that $R_j^k\subset U_i^l$.
Further, following \cite[(2.13)]{Saw1}, we arrive to \begin{align}\label{cr4}
\sum_{(k,j)\in\Gamma_l} \bigl|{E}_j^k\bigr|_w\bigl|R_j^k\bigr|_{\sigma}^q
=&\sum_{i=1}^{i(l)}\sum_{(k,j)\colon R_j^k\subset U_i^l}\int_{{E}_j^k}w\bigl[I_2(\chi_{U_i^l}\sigma)(x_{j+1}^k,y_j^k)\bigr]^q\nonumber\\
\le& \sum_{i=1}^{i(l)}\int_{\mathbb{R}_+^2}\chi_i^lw\bigl[I_2(\chi_{U_i^l}\sigma)\bigr]^q.\end{align}

By analogy with \cite[(2.8)]{Saw1}, let us first show the validity of the estimate
\begin{equation}\label{cr3}
\int_{\mathbb{R}_+^2}\chi_i^lw\bigl[I_2(\chi_{U_i^l}\sigma)\bigr]^q\le \max\Bigl\{\beta,2q(p')^{q-1}\Bigl(\frac{q}{r}\Bigr)^{\frac{q}{r}}\Bigr\} \bigl(B_i^l\bigr)^q \bigl|U_i^l\bigr|_\sigma^{\frac{q}{p}}\end{equation} for $U_i^l=(0,a)\times(0,b)$ in the case $q<p$, where
\begin{equation*}
\bigl(B_i^l\bigr)^r=\int_{\mathbb{R}_+^2}\chi_i^l(x,y)d_y\bigl[I_2\sigma(x,y)\bigr]^{\frac{r}{p'}}\,d_x\Bigl(-\bigl[I_2^\ast w(x,y)\bigr]^{\frac{r}{q}}\Bigr).\end{equation*}
On $(0,a)\times(0,b)=U_i^l$, in view of Lemma \ref{lem1},
\begin{align*}\mathbf{V}_{U_i^l} =
\int_{U_i^l }\chi_i^lw\bigl(I_2\sigma\bigr)^{q}\le&\beta
\biggl(\int_{U_i^l }\chi_i^l(x,y)\,d_y\bigl[I_2\sigma(x,y)\bigr]^{\frac{r}{p'}}\,d_x\Bigl(-\bigl[I_2^\ast w(x,y)\bigr]^{\frac{r}{q}}\Bigr)\biggr)^{\frac{q}{r}}\bigl|U_i^l\bigr|_\sigma^{\frac{q}{p}}\nonumber\\\le&
\,\beta \bigl(B_i^l\bigr)^q \bigl|U_i^l\bigr|_\sigma^{\frac{q}{p}}.\end{align*} On the rectangle ${(a,\infty)\times(b,\infty)}$ we obtain the estimate:
\begin{multline*}
\int_{{(a,\infty)\times(b,\infty)}}\chi_i^lw\bigl|U_i^l\bigr|_\sigma^{q}=\Biggl(\int_{(a,\infty)\times(b,\infty)}\chi_i^l(x,y)\ d_xd_y\bigl[I^\ast_2 w\chi_i^l(x,y)\bigr]^{\frac{r}{q}}\Biggr)^{\frac{q}{r}}\bigl|U_i^l\bigr|_\sigma^{q}\\\le
\biggl(\int_{(a,\infty)\times(b,\infty)}\chi_i^l(x,y)\bigl[I_2 \sigma(x,y)\bigr]^{\frac{r}{p'}}\,d_x\,d_y\bigl[I^\ast_2 w\chi_i^l(x,y)\bigr]^{\frac{r}{q}}\biggr)^{\frac{q}{r}}\bigl|U_i^l\bigr|_\sigma^{\frac{q}{p}}\\\le
\biggl(\int_{\mathbb{R}_+^2}\chi_i^l(x,y)\bigl[I_2 \sigma(x,y)\bigr]^{\frac{r}{p'}}\,d_x\,d_y\bigl[I^\ast_2 w\chi_i^l(x,y)\bigr]^{\frac{r}{q}}\biggr)^{\frac{q}{r}}\bigl|U_i^l\bigr|_\sigma^{\frac{q}{p}},\end{multline*} whence by integration by parts
\begin{equation*}
\int_{\mathbb{R}_+^2}\chi_i^l(x,y)\bigl[I_2 \sigma(x,y)\bigr]^{\frac{r}{p'}}\,d_x\,d_y\bigl[I^\ast_2 w\chi_i^l(x,y)\bigr]^{\frac{r}{q}}=\bigl(B_i^l\bigr)^r.\end{equation*}
In the first of the two mixed cases --- ${(0,a)\times(b,\infty)}$ and ${(a,\infty)\times(0,b)}$ --- we obtain, using the criteria for the fulfillment of the one--dimensional weighted Hardy inequality for $f^p(x)=\int_0^b\sigma(x,y)\,dy$ (see \cite[\S\,1.3.2]{Maz}): \begin{align}&
\int_{{(0,a)\times(b,\infty)}}\chi_i^l(x,y)w(x,y)\biggl(\int_0^x\int_0^b\sigma\biggr)^{q}\,dxdy\nonumber\\&=\int_0^a\biggl(\int_b^\infty\chi_i^l(x,y)w(x,y)\,dy\biggr)\biggl(\int_0^x\biggl(\int_0^b\sigma(s,t)\,dt\biggr)\biggr)^{q}\,dx\nonumber\\&\le q(p')^{q-1}\biggl(\int_0^a\biggl(\int_s^\infty\int_b^\infty\chi_i^lw\biggr)^{\frac{r}{p}}\biggl(\int_0^s\int_0^b\sigma\biggr)^{\frac{r}{p'}}\biggl(\int_b^\infty\chi_i^l(s,t)w(s,t)\,dt\biggr)ds\biggr)^{\frac{q}{r}}\bigl|U_i^l\bigr|_\sigma^{\frac{q}{p}}\nonumber\\&=q(p')^{q-1}\Bigl(\frac{q}{r}\Bigr)^{\frac{q}{r}}\biggl(\int_0^a\biggl(\int_0^s\int_0^b\sigma\biggr)^{\frac{r}{p'}}d_s\biggl[-\biggl(\int_s^\infty\int_b^\infty\chi_i^lw\biggr)^{\frac{r}{q}}\biggr]\biggr)^{\frac{q}{r}}\bigl|U_i^l\bigr|_\sigma^{\frac{q}{p}}\nonumber\\&=q(p')^{q-1}\Bigl(\frac{q}{r}\Bigr)^{\frac{q}{r}}\biggl(\int_0^a\biggl(\int_0^s\int_0^b\sigma\biggr)^{\frac{r}{p'}}d_s\biggl[-\int_b^\infty d_t\biggl[-\biggl(\int_s^\infty\int_t^\infty\chi_i^lw\biggr)^{\frac{r}{q}}\biggr]\biggr]\biggr)^{\frac{q}{r}}\bigl|U_i^l\bigr|_\sigma^{\frac{q}{p}}\nonumber\\&=q(p')^{q-1}\Bigl(\frac{q}{r}\Bigr)^{\frac{q}{r}}\biggl(\int_0^a\int_b^\infty \chi_i^l(s,t)\biggl(\int_0^s\int_0^b\sigma\biggr)^{\frac{r}{p'}}\,d_sd_t\biggl(\int_s^\infty\int_t^\infty\chi_i^lw\biggr)^{\frac{r}{q}}\biggr)^{\frac{q}{r}}\bigl|U_i^l\bigr|_\sigma^{\frac{q}{p}}\nonumber\end{align} \begin{align}&\leq(p')^{q-1}\Bigl(\frac{q}{r}\Bigr)^{\frac{q}{r}}\biggl(\int_0^a\int_b^\infty\chi_i^l(s,t)\bigl[I_2\sigma(s,t)\bigr]^{\frac{r}{p'}}\,d_sd_t\biggl(\int_s^\infty\int_t^\infty\chi_i^lw\biggr)^{\frac{r}{q}}\biggr)^{\frac{q}{r}}\bigl|U_i^l\bigr|_\sigma^{\frac{q}{p}}\nonumber\\&\le q(p')^{q-1}\Bigl(\frac{q}{r}\Bigr)^{\frac{q}{r}}\biggl(\int_{\mathbb{R}_+^2}\chi_i^l(s,t)\bigl[I_2\sigma(s,t)\bigr]^{\frac{r}{p'}}\,d_sd_t\biggl(\int_s^\infty\int_t^\infty\chi_i^lw\biggr)^{\frac{r}{q}}\biggr)^{\frac{q}{r}}\bigl|U_i^l\bigr|_\sigma^{\frac{q}{p}}\nonumber\\&\le q(p')^{q-1}\Bigl(\frac{q}{r}\Bigr)^{\frac{q}{r}}
\biggl(\int_{\mathbb{R}_+^2}\chi_i^l(s,t)\,d_t\bigl[I_2\sigma(s,t)\bigr]^{\frac{r}{p'}}\,d_s\Bigl(-\bigl[I_2^\ast w(s,t)\bigr]^{\frac{r}{q}}\Bigr)\biggr)^{\frac{q}{r}}\bigl|U_i^l\bigr|_\sigma^{\frac{q}{p}}.\end{align}
The second mixed case is estimated in a similar way. So, \eqref{cr3} is proven.
Continuing \eqref{cr4}, we obtain, using \cite [(2.11)] {Saw1} and H\"older's inequality with $ r / q $, $ p / q $:
\begin{align*}
\sum_{(k,j)\in\Gamma_l} \bigl|{E}_j^k\bigr|_w\bigl|R_j^k\bigr|_{\sigma}^q
\le&\max\Bigl\{\beta,2q(p')^{q-1}\Bigl(\frac{q}{r}\Bigr)^{\frac{q}{r}}\Bigr\}\sum_i\bigl(B_i^l\bigr)^{q}\bigl|U_i^l\bigr|_\sigma^{\frac{q}{p}}\\
\le&\max\Bigl\{\beta,2q(p')^{q-1}\Bigl(\frac{q}{r}\Bigr)^{\frac{q}{r}}\Bigr\}\sum_i\bigl(B_i^l\bigr)^{q}\Bigl(2^{-l}\int_{\widetilde{U}_i^l\cap\{g>2^{l-3}\}}g\sigma\Bigr)^{\frac{q}{p}} \\
\le&\max\Bigl\{\beta,2q(p')^{q-1}\Bigl(\frac{q}{r}\Bigr)^{\frac{q}{r}}\Bigr\}\biggl(\sum_i\bigl(B_i^l\bigr)^{r}\biggr)^{\frac{q}{r}}\biggl(\sum_i 2^{-l}\int_{\widetilde{U}_i^l\cap\{g>2^{l-3}\}}g\sigma\biggr)^{\frac{q}{p}}\\\le& \max\Bigl\{\beta,2q(p')^{q-1}\Bigl(\frac{q}{r}\Bigr)^{\frac{q}{r}}\Bigr\}2^{-l{q}/{p}}\bigl(B_l\bigr)^{q}\biggl(\int_{\{g>2^{l-3}\}}g\sigma\biggr)^{\frac{q}{p}}.
\end{align*}
The last estimate is valid with
\begin{equation*}
\bigl(B_l\bigr)^r:=\int_{\mathbb{R}_+^2}\chi_{\{\cup_{(k,j)\in\Gamma_l} E_j^k\}}(x,y)d_y\bigl[I_2\sigma(x,y)\bigr]^{\frac{r}{p'}}\,d_x\Bigl(-\bigl[I_2^\ast w(x,y)\bigr]^{\frac{r}{q}}\Bigr)
\end{equation*}
due to the fact that for fixed $ l $ the rectangles $ \widetilde{U}_i^l $ do not intersect (see \cite[p. 8]{Saw1}). Combining it with \eqref{100}, we obtain, taking into account the relation
 $$\sum_l2^{l(p-1)}\chi_{\{g>2^{l-3}\}}\le\frac{2^{p-1}}{2^{p-1}-1}\, g^{p-1}\qquad\textrm{for}\quad p>1,
 $$
 H\"older's inequality with $ r / q $ and $ p / q $ and the fact that all $ E_j^k $ are disjoint: \begin{align}\label{I}
I\le&\Bigl(\frac{2}{3}\Bigr)^{q} \sum_l2^{lq}\sum_{(k,j)\in\Gamma_l} \bigl|{E}_j^k\bigr|_w\bigl|R_j^k\bigr|_{\sigma}^q\nonumber\\\le&\Bigl(\frac{2}{3}\Bigr)^{q}\max\Bigl\{\beta,2q(p')^{q-1}\Bigl(\frac{q}{r}\Bigr)^{\frac{q}{r}}\Bigr\}\sum_l 2^{lq}\bigl(B_l\bigr)^{q}\biggl(2^{-l}\int_{\{g>2^{l-3}\}}g\sigma\biggr)^{\frac{q}{p}}\nonumber\\
\le&\Bigl(\frac{2}{3}\Bigr)^{q}\max\Bigl\{\beta,2q(p')^{q-1}\Bigl(\frac{q}{r}\Bigr)^{\frac{q}{r}}\Bigr\}\Bigl(\sum_l \bigl(B_l\bigr)^{r}\Bigr)^{\frac{q}{r}}\biggl(\sum_l2^{l(p-1)}\int_{\{g>2^{l-3}\}}g\sigma\biggr)^{\frac{q}{p}}\nonumber\\
\le& \Bigl(\frac{2}{3}\Bigr)^{q}\max\Bigl\{\beta,2q(p')^{q-1}\Bigl(\frac{q}{r}\Bigr)^{\frac{q}{r}}\Bigr\}
\Bigl(\frac{2^{p-1}}{2^{p-1}-1}\Bigr)^{\frac{q}{p}}B^q\biggl(\int_{\mathbb{R}_+^2}f^pv\biggr)^{\frac{q}{p}}.\end{align} Combining \eqref{I} with \eqref{II} we arrive at the required upper bound for $ q <p $.

For $ p <q $, the term $ I $ is estimated identically to the case $ p \le q $ in \cite[p. 9]{Saw1}, i.e.
\begin{equation}\label{IA}
I\le\Bigl(\frac{2}{3}\Bigr)^{q}\max\Bigl\{\alpha,2q(q')^{\frac{q}{p'}}\Bigr\}
\Bigl(\frac{2^{p-1}}{2^{p-1}-1}\Bigr)^{\frac{q}{p}} A^q\biggl(\int_{\mathbb{R}_+^2}f^pv\biggr)^{\frac{q}{p}},\end{equation}
relying on an analog of the inequality \eqref{cr3} of the form
\begin{equation*}\label{cr3'''}
\int_{\mathbb{R}_+^2}\chi_i^lw\bigl[I_2(\chi_{U_i^l}\sigma)\bigr]^q\le\max\Bigl\{\alpha,2q(q')^{\frac{q}{p'}}\Bigr\} A^q \bigl|U_i^l\bigr|_\sigma^{\frac{q}{p}}\quad\textrm{for}\ U_i^l=(0,a)\times(0,b).\end{equation*} Note that in this case, unlike \cite[(2.8)]{Saw1}, to perform the estimate on the rectangle $(0, a)\times (0, b)=U_i^l$ one should apply the statement of Lemma \ref{lem1}, from which it follows that
\begin{equation*}\mathbf{V}_{U_i^l} \le \alpha
\bigl|U_i^l\bigr|_\sigma^{\frac{q}{p}} A^q.\end{equation*}

The final upper estimate $$\int_{\mathbb{R}_+^2}(I_2f)^qw\le C\biggl( \int_{\mathbb{R}_+^2}f^pv\biggr)^{\frac{1}{p}}\biggl(\int_{\mathbb{R}_+^2}(I_2f)^qw\biggr)^{\frac{1}{q'}}+C^q\biggl( \int_{\mathbb{R}_+^2}f^pv\biggr)^{\frac{q}{p}}$$ follows from \eqref{20} combined with \eqref{IIA} and \eqref{IA} for $ p <q $ (or \eqref{II} and \eqref{I} if $ q <p $) with $ C = A \cdot \mathbb{C}_{\alpha, \alpha'} $ in case $ p <q $ and $ C = B \cdot \mathbf{C}_{\beta,\beta'} $ for $ q <p $.

(\textit{Necessity}) The validity of $ A \le C_2 $ follows by substituting $f=\chi_{(0,s)\times(0,t)}$ into the initial inequality \eqref{aux}. To establish $ B \lesssim C_2 $ in the case $ q <p $, we apply the test function
$$
f(s,y)=\sigma(s,y)\biggl[\int_s^\infty \bigl[I_2\sigma(x,y)\bigr]^{\frac{r}{q'}}\bigl[I_2^\ast w(x,y)\bigr]^{\frac{r}{p}}
\Bigl(\int_y^\infty w(x,t)\,dt\Bigr)dx\biggr]^{\frac{1}{p}}=:
{\sigma(s,y)J(s,y)}
$$
into \eqref{aux}. Then
\begin{align}\label{cr2}
\int_{\mathbb{R}_+^2}f^{p}v=&\int_{\mathbb{R}^2_+}\sigma(s,y){\bigl[J(s,y)\bigr]^p}dsdy\nonumber\\=&
\int_{\mathbb{R}^2_+} \bigl[I_2\sigma(x,y)\bigr]^{\frac{r}{q'}}\bigl[I_2^\ast w(x,y)\bigr]^{\frac{r}{p}}
\Bigl(\int_y^\infty w(x,t)\,dt\Bigr)\Bigl(\int_0^x\sigma(s,y)ds\Bigr)dxdy \nonumber\\=&\frac{p'q}{r^2}
\int_{\mathbb{R}^2_+} d_y\bigl[I_2\sigma(x,y)\bigr]^{\frac{r}{p'}}\,d_x\Bigl[-\bigl[I_2^\ast w(x,y)\bigr]^{\frac{r}{q}}\Bigr]=\frac{p'q}{r^2}\,B^{r}.\end{align}

To estimate the left--hand side of the inequality \eqref{aux}, we write
\begin{align}\label{cr6'}
{\bigl[J(s,y)\bigr]^p}=&\frac{q}{r}
\bigl[I_2\sigma(s,y)\bigr]^{\frac{r}{q'}}\bigl[I_2^\ast w(s,y)\bigr]^{\frac{r}{q}}\nonumber\\
&+\frac{q}{q'}\int_s^\infty \bigl[I_2\sigma(x,y)\bigr]^{\frac{r}{q'}-1}\bigl[I_2^\ast w(x,y)\bigr]^{\frac{r}{q}}\Bigl(\int_0^y\sigma(x,t)\,dt\Bigr)dx\nonumber\\
=&:\frac{q}{r}
\bigl[J_1(s,y)\bigr]^p+\frac{q}{q'}\bigl[J_2(s,y)\bigr]^p.
\end{align}
Then, for our chosen $ f $,
\begin{align*}
F(u,z):=&\int_0^u\int_0^zf=\int_0^u\int_0^z
\sigma(s,y){J(s,y)}\,dyds
\\
\geq& 2^{-\frac{1}{p'}}\biggl(
\Bigl(\frac{q}{r}\Bigr)^{\frac{1}{p}}\int_0^u\int_0^z
\sigma(s,y){
J_1(s,y)}\,dyds+\Bigl(\frac{q}{q'}\Bigr)^{\frac{1}{p}}
\int_0^u\int_0^z
\sigma(s,y){
J_2(s,y)}\,dyds\biggr)\\=&:
2^{-\frac{1}{p'}}\bigl(F_1+F_2 \bigr).
\end{align*}
To estimate $ F_2 $, we observe that
\begin{align*}
\Bigl(\frac{q'}{q}\Bigr)^{\frac{1}{p}}F_2=&
\int_0^u\int_0^z
\sigma(s,y){
J_2(s,y)}\,dyds\\
\ge& \bigl[I_2^\ast w(u,z)\bigr]^{\frac{r}{qp}}\int_0^u\int_0^z
\sigma(s,y)\biggl[\int_s^u \bigl[I_2\sigma(x,y)\bigr]^{\frac{r}{q'}-1}
\Bigl(\int_0^y\sigma(x,t)\,dt\Bigr)dx\biggr]^{\frac{1}{p}}dyds.\end{align*} Since
\begin{equation}\label{cr7}\int_s^u \bigl[I_2\sigma(x,y)\bigr]^{\frac{r}{q'}-1}
\Bigl(\int_0^y\sigma(x,t)\,dt\Bigr)dx\le\frac{q'}{r}\bigl[I_2\sigma(u,y)\bigr]^{\frac{r}{q'}},\end{equation} then
\begin{multline*}
\int_0^u\int_0^z
\sigma(s,y)\biggl[\int_s^u \bigl[I_2\sigma(x,y)\bigr]^{\frac{r}{q'}-1}
\Bigl(\int_0^y\sigma(x,t)\,dt\Bigr)dx\biggr]^{1-\frac{1}{p'}}dyds\\\ge \Bigl(\frac{q'}{r}\Bigr)^{-\frac{1}{p'}}
\int_0^u\int_0^z
\sigma(s,y)\bigl[I_2\sigma(u,y)\bigr]^{-\frac{r}{q'p'}}\biggl[\int_s^u \bigl[I_2\sigma(x,y)\bigr]^{\frac{r}{q'}-1}
\Bigl(\int_0^y\sigma(x,t)\,dt\Bigr)dx\biggr]dyds\\\ge
\Bigl(\frac{q'}{r}\Bigr)^{-\frac{1}{p'}}\bigl[I_2\sigma(u,z)\bigr]^{-\frac{r}{q'p'}}
\int_0^u\int_0^z
\sigma(s,y)\biggl[\int_s^u \bigl[I_2\sigma(x,y)\bigr]^{\frac{r}{q'}-1}
\Bigl(\int_0^y\sigma(x,t)\,dt\Bigr)dx\biggr]dyds\\=
\Bigl(\frac{q'}{r}\Bigr)^{-\frac{1}{p'}}\bigl[I_2\sigma(u,z)\bigr]^{-\frac{r}{q'p'}}
\int_0^u\int_0^z\bigl[I_2\sigma(x,y)\bigr]^{\frac{r}{q'}-1}
\Bigl(\int_0^x\sigma(s,y)\,ds\Bigr)\Bigl(\int_0^y\sigma(x,t)\,dt\Bigr)dydx
\end{multline*}
and, therefore,
\begin{align*}
F_2\ge& \Bigl(\frac{q}{q'}\Bigr)^{\frac{1}{p}}\Bigl(\frac{r}{q'}\Bigr)^{\frac{1}{p'}}\bigl[I_2\sigma(u,z)\bigr]^{-\frac{r}{q'p'}}\bigl[I_2^\ast w(u,z)\bigr]^{\frac{r}{qp}}\\
&\times\int_0^u\int_0^z\bigl[I_2\sigma(x,y)\bigr]^{\frac{r}{q'}-1}
\Bigl(\int_0^x\sigma(s,y)\,ds\Bigr)\Bigl(\int_0^y\sigma(x,t)\,dt\Bigr)dxdy\\
=&:\Bigl(\frac{q}{r}\Bigr)^{\frac{1}{p}}\frac{r}{q'}\bigl[I_2\sigma(u,z)\bigr]^{-\frac{r}{q'p'}}\bigl[I_2^\ast w(u,z)\bigr]^{\frac{r}{qp}}{\mathbf {J}_2(u,z)}.\end{align*}
For $F_1$ we obtain:
\begin{align*}
F_1=&\Bigl(\frac{q}{r}\Bigr)^{\frac{1}{p}}\int_0^u\int_0^z
\sigma(s,y)\bigl[I_2\sigma(s,y)\bigr]^{\frac{r}{q'p}}\bigl[I_2^\ast w(s,y)\bigr]^{\frac{r}{qp}}dyds\\
\ge&\Bigl(\frac{q}{r}\Bigr)^{\frac{1}{p}}\bigl[I_2\sigma(u,z)\bigr]^{-\frac{r}{q'p'}}\bigl[I_2^\ast w(u,z)\bigr]^{\frac{r}{qp}}\int_0^u\int_0^z
\sigma(s,y)\bigl[I_2\sigma(s,y)\bigr]^{\frac{r}{q'}}dyds\\
=&:\Bigl(\frac{q}{r}\Bigr)^{\frac{1}{p}}\bigl[I_2\sigma(u,z)\bigr]^{-\frac{r}{q'p'}}\bigl[I_2^\ast w(u,z)\bigr]^{\frac{r}{qp}}{
\mathbf {J}_1(u,z)}.\end{align*}
{
It holds that
$$
F(u,z)\geq 2^{-\frac{1}{p'}}\Bigl(\frac{q}{r}\Bigr)^{\frac{1}{p}}\bigl[I_2\sigma(u,z)\bigr]^{-\frac{r}{q'p'}}\bigl[I_2^\ast w(u,z)\bigr]^{\frac{r}{qp}}\bigl({
\mathbf {J}_1(u,z)}+\frac{r}{q'}{\mathbf {J}_2(u,z)}
\bigr).
$$
Integrating by parts we find:
\begin{align*}
{\mathbf {J}_2(u,z)}=&\frac{q'}{r}\int_0^udx\int_0^z\Bigl(\int_0^y\sigma(x,t)\,dt\Bigr)d_y
\bigl[I_2\sigma(x,y)\bigr]^{\frac{r}{q'}}\\
=&\frac{q'}{r}\int_0^u\Bigl(\int_0^z\sigma(x,t)\,dt\Bigr)
\bigl[I_2\sigma(x,z)\bigr]^{\frac{r}{q'}}dx-\frac{q'}{r}{
\mathbf {J}_1(u,z)}\\
=&\frac{q'p'}{r^2}\bigl[I_2\sigma(u,z)\bigr]^{\frac{r}{p'}}-\frac{q'}{r}{
\mathbf {J}_1(u,z)}.\end{align*}
Hence,
\begin{equation}\label{cr8}
F(u,z)\ge 2^{-\frac{1}{p'}} \Bigl(\frac{q}{r}\Bigr)^{\frac{1}{p}}\frac{p'}{r}\bigl[I_2\sigma(u,z)\bigr]^{\frac{r}{qp'}}\bigl[I_2^\ast w(u,z)\bigr]^{\frac{r}{qp}}.
\end{equation}
}
We write making use of \eqref{cr6'}:
\begin{align}\label{cr9}
\int_{\mathbb{R}^2_+}(I_2f)^{q}w=& \int_{\mathbb{R}^2_+}f(x,y)\biggl(\int_x^\infty\int_y^\infty w(u,z)\bigl[F(u,z)\bigr]^{q-1}dzdu\biggr)\,dxdy\nonumber\\
 \ge& 2^{-\frac{1}{p'}}
\int_{\mathbb{R}^2_+}\sigma(x,y)\biggl(\int_x^\infty\int_y^\infty wF^{q-1}\biggr)\biggl\{\Bigl(\frac{q}{r}\Bigr)^{\frac{1}{p}}
\bigl[I_2\sigma(x,y)\bigr]^{\frac{r}{q'p}}\bigl[I_2^\ast w(x,y)\bigr]^{\frac{r}{qp}}\nonumber\\&+\Bigl(\frac{q}{q'}\Bigr)^{\frac{1}{p}}\biggl[\int_x^\infty \bigl[I_2\sigma(s,y)\bigr]^{\frac{r}{q'}-1}\bigl[I_2^\ast w(s,y)\bigr]^{\frac{r}{q}}
\Bigl(\int_0^y\sigma(s,t)\,dt\Bigr)ds\biggr]^{\frac{1}{p}}\biggr\}dxdy \nonumber\\=&: 2^{-\frac{1}{p'}}\bigl(G_1+G_2\bigr).
\end{align}
$ G_1 $ is evaluated with \eqref{cr8} as follows:
\begin{align}\label{cr13}
G_1=&\Bigl(\frac{q}{r}\Bigr)^{\frac{1}{p}}
\int_{\mathbb{R}^2_+}\sigma(x,y)
\bigl[I_2\sigma(x,y)\bigr]^{\frac{r}{q'p}}\bigl[I_2^\ast w(x,y)\bigr]^{\frac{r}{qp}}\biggl(\int_x^\infty\int_y^\infty w\biggr)\bigl[F(x,y)\bigr]^{q-1}\,dxdy\nonumber\\\ge& 2^{-\frac{q-1}{p'}}\Bigl(\frac{q}{r}\Bigr)^{\frac{q}{p}}\Bigl(\frac{p'}{r}\Bigr)^{{q-1}}
\int_{\mathbb{R}^2_+}\sigma(x,y)
\bigl[I_2\sigma(x,y)\bigr]^{\frac{r}{q'}}\bigl[I_2^\ast w(x,y)\bigr]^{\frac{r}{q}}\,dxdy.\end{align}
It is true for $ G_2 $:
\begin{align*}\Bigl(\frac{q'}{q}\Bigr)^{\frac{1}{p}}G_2=&
\int_{\mathbb{R}^2_+}\sigma(x,y)\biggl[\int_x^\infty \bigl[I_2\sigma(s,y)\bigr]^{\frac{r}{q'}-1}\bigl[I_2^\ast w(s,y)\bigr]^{\frac{r}{q}}
\Bigl(\int_0^y\sigma(s,t)\,dt\Bigr)ds\biggr]^{\frac{1}{p}}\nonumber\\ &\times\biggl(\int_x^\infty\int_y^\infty w(u,z)\bigl[F(u,z)\bigr]^{q-1}dzdu\biggr)\,dxdy\nonumber\\=&
\int_{\mathbb{R}^2_+}\int_0^u\sigma(x,y)\biggl[\int_x^\infty \bigl[I_2\sigma(s,y)\bigr]^{\frac{r}{q'}-1}\bigl[I_2^\ast w(s,y)\bigr]^{\frac{r}{q}}
\Bigl(\int_0^y\sigma(s,t)\,dt\Bigr)ds\biggr]^{\frac{1}{p}}dx\nonumber\\&\times\biggl(\int_y^\infty w(u,z)\bigl[F(u,z)\bigr]^{q-1}dz\biggr)\,dudy \nonumber\\\ge&
\int_{\mathbb{R}^2_+}\int_0^u\sigma(x,y)\biggl[\int_x^u \bigl[I_2\sigma(s,y)\bigr]^{\frac{r}{q'}-1}\bigl[I_2^\ast w(s,y)\bigr]^{\frac{r}{q}}
\Bigl(\int_0^y\sigma(s,t)\,dt\Bigr)ds\biggr]^{\frac{1}{p}}dx \nonumber\\&\times\biggl(\int_y^\infty w(u,z)\bigl[F(u,z)\bigr]^{q-1}dz\biggr)\,dudy \nonumber\\\ge&
\int_{\mathbb{R}^2_+}\bigl[I_2^\ast w(u,y)\bigr]^{\frac{r}{pq}}\int_0^u\sigma(x,y)\biggl[\int_x^u \bigl[I_2\sigma(s,y)\bigr]^{\frac{r}{q'}-1}
\Bigl(\int_0^y\sigma(s,t)\,dt\Bigr)ds\biggr]^{1-\frac{1}{p'}}dx\nonumber\\&\times\biggl(\int_y^\infty w(u,z)\bigl[F(u,z)\bigr]^{q-1}dz\biggr)\,dudy\\\overset{\eqref{cr7}}\ge&
\Bigl(\frac{r}{q'}\Bigr)^{\frac{1}{p'}}\int_{\mathbb{R}^2_+}\bigl[I_2\sigma(u,y)\bigr]^{-\frac{r}{q'p'}}
\bigl[I_2^\ast w(u,y)\bigr]^{\frac{r}{qp}}\\&\times \int_0^u\sigma(x,y)\biggl[\int_x^u \bigl[I_2\sigma(s,y)\bigr]^{\frac{r}{q'}-1}
\Bigl(\int_0^y\sigma(s,t)\,dt\Bigr)ds\biggr]dx\end{align*} \begin{align*}&\times\biggl(\int_y^\infty w(u,z)\bigl[F(u,z)\bigr]^{q-1}dz\biggr)\,dudy\\
\ge&\Bigl(\frac{r}{q'}\Bigr)^{\frac{1}{p'}}\int_{\mathbb{R}^2_+}
\bigl[I_2\sigma(u,y)\bigr]^{-\frac{r}{q'p'}}
\bigl[I_2^\ast w(u,y)\bigr]^{\frac{r}{qp}}\biggl(\int_y^\infty w(u,z)\,dz\biggr)\bigl[F(u,y)\bigr]^{q-1}\\&\times\biggl[\int_0^u \bigl[I_2\sigma(s,y)\bigr]^{\frac{r}{q'}-1}\Bigl(\int_0^s\sigma(x,y)\,dx\Bigr)
\Bigl(\int_0^y\sigma(s,t)\,dt\Bigr)ds\biggr]\,dudy.\end{align*}
Integrating by parts we find
\begin{multline*}
\int_0^u \bigl[I_2\sigma(s,y)\bigr]^{\frac{r}{q'}-1}\Bigl(\int_0^s\sigma(x,y)\,dx\Bigr)
\Bigl(\int_0^y\sigma(s,t)\,dt\Bigr)ds\\
=\frac{q'}{r}\Bigl(\int_0^u\sigma(x,y)\,dx\Bigr)
\bigl[I_2\sigma(u,y)\bigr]^{\frac{r}{q'}}dx-\frac{q'}{r}\int_0^u\bigl[I_2\sigma(s,y)\bigr]^{\frac{r}{q'}}\sigma(s,y)\,ds.
\end{multline*}
Hence, continuing the reasoning, we obtain for $ G_2 $ using \eqref{cr8}:
\begin{align}\label{cr13'}
\Bigl(\frac{q'}{q}\Bigr)^{\frac{1}{p}}G_2\ge& 2^{-\frac{q-1}{p'}}\Bigl(\frac{q'}{r}\Bigr)^{\frac{1}{p}}\Bigl(\frac{q}{r}\Bigr)^{\frac{q-1}{p}}\Bigl(\frac{p'}{r}\Bigr)^{q-1}\int_{\mathbb{R}^2_+}
\bigl[I_2^\ast w(u,y)\bigr]^{\frac{r}{p}}\biggl(\int_y^\infty w(u,z)\,dz\biggr)\nonumber\\&\times\biggl[\bigl[I_2\sigma(u,y)\bigr]^{\frac{r}{q'}}\int_0^u \sigma(x,y)\,dx-\int_0^u\bigl[I_2\sigma(s,y)\bigr]^{\frac{r}{q'}}\sigma(s,y)ds\biggr]\,dudy.
\end{align}
Since
\begin{multline*}
\int_{\mathbb{R}^2_+}
\bigl[I_2^\ast w(u,y)\bigr]^{\frac{r}{p}}\biggl(\int_y^\infty w(u,z)\,dz\biggr)\biggl[\int_0^u\bigl[I_2\sigma(s,y)\bigr]^{\frac{r}{q'}}\sigma(s,y)\,ds\biggr]\,dudy\\=\frac{q}{r}
\int_{\mathbb{R}^2_+}
\bigl[I_2^\ast w(u,y)\bigr]^{\frac{r}{q}}\bigl[I_2\sigma(u,y)\bigr]^{\frac{r}{q'}}\sigma(u,y)\,dudy
\end{multline*}
then from \eqref{cr9} we obtain, applying \eqref{cr13} and \eqref{cr13'},
\begin{multline*}
2^{\frac{q}{p'}}\int_{\mathbb{R}^2_+}(I_2f)^{q}w\ge 
\Bigl(\frac{q}{r}\Bigr)^{\frac{q}{p}}\Bigl(\frac{p'}{r}\Bigr)^{{q-1}}
\int_{\mathbb{R}^2_+}\sigma(x,y)
\bigl[I_2\sigma(x,y)\bigr]^{\frac{r}{q'}}\bigl[I_2^\ast w(x,y)\bigr]^{\frac{r}{q}}\,dxdy\\
+\Bigl(\frac{q}{r}\Bigr)^{\frac{q}{p}}\Bigl(\frac{p'}{r}\Bigr)^{\frac{q}{q'}}
\int_{\mathbb{R}^2_+}
\bigl[I_2^\ast w(u,y)\bigr]^{\frac{r}{p}}\biggl(\int_y^\infty w(u,z)\,dz\biggr)\bigl[I_2\sigma(u,y)\bigr]^{\frac{r}{q'}}\biggl(\int_0^u \sigma(x,y)\,dx\biggr)dudy\\
-\Bigl(\frac{q}{r}\Bigr)^{\frac{q}{p}}\Bigl(\frac{p'}{r}\Bigr)^{{q-1}}\frac{q}{r}\int_{\mathbb{R}^2_+}\sigma(x,y)
\bigl[I_2\sigma(x,y)\bigr]^{\frac{r}{q'}}\bigl[I_2^\ast w(x,y)\bigr]^{\frac{r}{q}}\,dxdy\\
=\Bigl(\frac{q}{r}\Bigr)^{\frac{q}{p}}\Bigl(\frac{p'}{r}\Bigr)^{{q-1}}\frac{q}{p}\int_{\mathbb{R}^2_+}\sigma(x,y)
\bigl[I_2\sigma(x,y)\bigr]^{\frac{r}{q'}}\bigl[I_2^\ast w(x,y)\bigr]^{\frac{r}{q}}\,dxdy\\
+\Bigl(\frac{q}{r}\Bigr)^{\frac{q}{p}+1}\Bigl(\frac{p'}{r}\Bigr)^{{q}}
\int_{\mathbb{R}^2_+}
d_u\Bigl(-\bigl[I_2^\ast w(u,y)\bigr]^{\frac{r}{q}}\Bigr)d_y\bigl[I_2\sigma(u,y)\bigr]^{\frac{r}{p'}}\ge 
\Bigl(\frac{q}{r}\Bigr)^{\frac{q}{p}+1}\Bigl(\frac{p'}{r}\Bigr)^{{q}}B^r.
\end{multline*}
In view of \eqref{cr2}, the required lower bound for $ C_2 $ in the case $ q <p $ is proven.
\end{proof}

Recall that in the case $ p \le q $ the best constant $ C_2 $ of the two--dimensional inequality \eqref{aux} is equivalent to $ \sum_{i = 1}^3A_i $ (see Theorem \ref{ES}). However, by virtue of the statements of Lemmas \ref{lem1} and \ref{lem2}, for $ p <q $ the following inequalities take place:
\begin{equation}\label{AAA}A_1\le C_2\le \mathbb{C}_{1,1}\bigl[ A_1+A_2+A_3\bigr]\le \mathbb{C}_{1,1}\bigl[1+\alpha(p,q)^{\frac{1}{q}}+\alpha(q',p')^{\frac{1}{p'}}\bigr]A_1.
\end{equation} Moreover,
$$\lim_{p\uparrow q}\bigl[\alpha(p,q)+\alpha(q',p')\bigr]=\infty.
$$
Thus, the last estimate in \eqref{AAA} and the upper bound in the main theorem have blow-up for $p\uparrow q $.

Estimates similar to \eqref {AAA} hold also in the case $ q <p $ if conditions $ r / p \ge 1 $ and $ r / q'\ge 1 $ are simultaneously satisfied, namely,
\begin{equation}\label{BBB}\Bigl(\frac{q}{r}\Bigr)^{\frac{1}{q}}\Bigl(\frac{p'}{2r}\Bigr)^{\frac{1}{p'}}B_1\le C_2\le\mathbf{C}_{1,1}\bigl[ B_1+B_2+B_3\bigr]\le \mathbf{C}_{1,1}\bigl[1+\boldsymbol{\beta}(p,q)+\boldsymbol{\beta}(q',p')\bigr]B_1,
\end{equation}
where $$
\boldsymbol{\beta}(p,q)=\frac{2^{1/q+1}}{(2^{(r-q)/p}-1)^{1/r}(2^{q/r}-1)^{1/p}}.
$$
Observe that
$$\lim_{q\uparrow p} \bigl[\boldsymbol{\beta}(p,q)+\boldsymbol{\beta}(q',p')\bigr]=\infty.
$$
In the rest cases, the following inequalities take place for $ q <p $: \begin{gather}\label{3B}\Bigl(\frac{q}{r}\Bigr)^{\frac{1}{q}}\Bigl(\frac{p'}{2r}\Bigr)^{\frac{1}{p'}}B_1\le C_2\le \begin{cases}\mathbf{C}_{1,\beta'}\,\bigl[B_1+B_2\bigr]\le \mathbf{C}_{1,\beta'}\bigl[1+\boldsymbol{\beta}(p,q)\bigr]B_1, & \frac{r}{p}\ge 1\ {\&}\ \frac{r}{q'}< 1,\\
\mathbf{C}_{\beta,1}\,\bigl[B_1+B_3\bigr]\le \mathbf{C}_{\beta,1}\bigl[1+\boldsymbol{\beta}(q',p')\bigr]B_1, & \frac{r}{p}< 1\ {\&}\ \frac{r}{q'}\ge 1,\\
\mathbf{C}_{\beta,\beta'}\,B_1\, & \frac{r}{p}< 1\ {\&}\ \frac{r}{q'}< 1.\end{cases}\end{gather}
On the strength of the restrictions on the parameters $ p $ and $ q $, all coefficients in \eqref{3B} are finite. In the first zone $ r \to \infty $ only if $ p, q \to \infty $; similarly, in the second zone $ r \to \infty $ only if $ p, q \to 1 $; and in the third zone $ r $ cannot approach $ \infty $. In addition, $ \mathbf{C}_{1,1} $ in \eqref{BBB} does not diverge for $ q \uparrow p $, and, therefore, the second inequality gives an upper bound in Sawyer's theorem for $ p = q $, since $ \lim \limits_ {q \uparrow p} B_i = A_i, i = 1,2,3 $ (see \eqref{AB}).

The upper estimates in \eqref {BBB}--\eqref{3B} can be proven similarly to the upper bound for $ C_2 $ in the case $ q <p $ in the main theorem. The only difference is that for $ r / p \ge 1 $, instead of Lemma \ref{lem1}, one should use the inequality
\begin{multline*}
\mathbf{V}_{(a,b)\times(c,d)}\le\bigl[I_2 \sigma(b,d)\bigr]^{\frac{q}{p}}
\biggl[\int_a^b\int_c^d\frac{\chi_{\supp\,w}(x,y)}{\bigl[I_2 \sigma(x,y)\bigr]^{\frac{r}{p}}}\,d_x\,d_y\biggl(\int_0^{x}\int_0^{y}(I_2\sigma)^q w\biggr)^{\frac{r}{q}}\biggr]^{\frac{q}{r}}.
\end{multline*} Similarly, for $ r / q'\ge 1 $, instead of Lemma \ref{lem2}, the following estimate should be applied:
\begin{multline*}
\mathbf{W}_{(a,b)\times(c,d)}\le\bigl[I_2^\ast w(a,c)\bigr]^{\frac{p'}{q'}}
\biggl[\int_a^b\int_c^d\frac{\chi_{\supp\,\sigma}(x,y)}{\bigl[I_2^\ast w(x,y)\bigr]^{\frac{r}{q'}}}\,d_x\,d_y\biggl(\int_{x}^\infty\int_{y}^\infty(I_2^\ast w)^{p'} \sigma\biggr)^{\frac{r}{p'}}\biggr]^{\frac{p'}{r}}.
\end{multline*}

To establish $ B_2 \le \boldsymbol {\beta} (p, q) B_1 $ we split $ {\mathbb{R}_+^2} $ into domains $ \omega_k $ (as in Lemma \ref{lem1}). Then
\begin{align*}
&\int_{\mathbb{R}_+^2}\bigl[I_2 \sigma(x,y)\bigr]^{-\frac{r}{p}}\,d_x\,d_y\biggl(\int_0^{x}\int_0^{y}(I_2\sigma)^q w\biggr)^{\frac{r}{q}}\\&=
\sum_{k\le K_\sigma}\int_{\omega_k\setminus\omega_{k+1}}\bigl[I_2 \sigma(x,y)\bigr]^{-\frac{r}{p}}\,d_x\,d_y\biggl(\int_0^{x}\int_0^{y}(I_2\sigma)^q w\biggr)^{\frac{r}{q}}\end{align*}\begin{align*}&\le
\sum_{k\le K_\sigma}2^{-kr/p}\int_{\omega_k\setminus\omega_{k+1}}d_x\,d_y\biggl(\int_0^{x}\int_0^{y}(I_2\sigma)^q w\biggr)^{\frac{r}{q}}\\&\le
\sum_{k\le K_\sigma}2^{-kr/p}\int_{\mathbb{R}_+^2\setminus\omega_{k+1}}d_x\,d_y\biggl(\int_0^{x}\int_0^{y}(I_2\sigma)^q w\biggr)^{\frac{r}{q}}.
\end{align*} Since
\begin{multline*}
\int_{\mathbb{R}_+^2\setminus\omega_{k+1}}d_x\,d_y\biggl(\int_0^{x}\int_0^{y}(I_2\sigma)^q w\biggr)^{\frac{r}{q}}=
\int_{\mathbb{R}_+^2}\chi_{\mathbb{R}_+^2\setminus\omega_{k+1}}(x,y)\,\,d_x\,d_y\biggl(\int_0^{x}\int_0^{y}(I_2\sigma)^q w\biggr)^{\frac{r}{q}}\\=\int_{\mathbb{R}_+^2}d_x\,d_y\biggl(\int_0^{x}\int_0^{y}\chi_{\mathbb{R}_+^2\setminus\omega_{k+1}}(I_2\sigma)^q w\biggr)^{\frac{r}{q}}=
\biggl(\int_{\mathbb{R}_+^2\setminus\omega_{k+1}}(I_2\sigma)^q w\biggr)^{\frac{r}{q}},
\end{multline*} then we have
$$
\sum_{k\le K_\sigma}2^{-kr/p}\int_{\mathbb{R}_+^2\setminus\omega_{k+1}}d_x\,d_y\biggl(\int_0^{x}\int_0^{y}(I_2\sigma)^q w\biggr)^{\frac{r}{q}}\\ =
\sum_{k\le K_\sigma}2^{-kr/p}\biggl(\int_{\mathbb{R}_+^2\setminus\omega_{k+1}}(I_2\sigma)^q w\biggr)^{\frac{r}{q}}.
$$ From Proposition \ref{propGHS}(b) with $ \tau = 2^{\frac{q}{p}} $ and $ \gamma = r / q $
\begin{multline*}
\sum_{k\le K_\sigma}2^{-kr/p}\biggl(\int_{\mathbb{R}_+^2\setminus\omega_{k+1}}(I_2\sigma)^q w\biggr)^{\frac{r}{q}}=\sum_{k\le K_\sigma}2^{-kr/p}\biggl(\sum_{m\le k}\int_{\omega_m\setminus\omega_{m+1}}(I_2\sigma)^q w\biggr)^{\frac{r}{q}}\\\le\frac{2^{r/p}}{(2^{(r-q)/p}-1)(2^{q/r}-1)^{r/p}}\sum_{k\le K_\sigma}2^{-kr/p}\biggl(\int_{\omega_k\setminus\omega_{k+1}}(I_2\sigma)^q w\biggr)^{\frac{r}{q}}\\\le
\frac{2^{r/p+r}}{(2^{(r-q)/p}-1)(2^{q/r}-1)^{r/p}}\sum_{k\le K_\sigma}2^{kr/p'}\biggl(\int_{\omega_k\setminus\omega_{k+1}} w\biggr)^{\frac{r}{q}}.
\end{multline*} By analogy with the proof of Lemma \ref{lem1}, we can write
$$|\omega_k\setminus\omega_{k+1}|_w^{\frac{r}{q}}\le|\omega_k|_w^{\frac{r}{q}}=\int_{\mathbb{R}^2_+}d_xd_y\bigl[I^\ast_2(\chi_{\omega_k}w)(x,y)\bigr]^{\frac{r}{q}}= \int_{\omega_k}d_xd_y\bigl[I^\ast_2w(x,y)\bigr]^{\frac{r}{q}}.$$ Hence (see Proposition \ref{propGHS}(a)),
\begin{multline*}
\sum_{k\le K_\sigma}2^{kr/p'}\biggl(\int_{\omega_k\setminus\omega_{k+1}} w\biggr)^{\frac{r}{q}}\le \sum_{k\le K_\sigma}2^{kr/p'}\int_{\omega_k}d_xd_y\bigl[I^\ast_2w(x,y)\bigr]^{\frac{r}{q}}\\\le\sum_{k\le K_\sigma}2^{k}\int_{\omega_k}\bigl[I_2\sigma(x,y)\bigr]^{\frac{r}{q'}}d_xd_y\bigl[I^\ast_2w(x,y)\bigr]^{\frac{r}{q}}\\\le  2\sum_{k\le K_\sigma}
\int_{\omega_k\setminus\omega_{k+1}}\bigl[I_2\sigma(x,y)\bigr]^{\frac{r}{p'}}d_xd_y\bigl[I^\ast_2w(x,y)\bigr]^{\frac{r}{q}}= B_1^r.
\end{multline*}

Similarly, one can show that $ B_3 \le \boldsymbol {\beta} (q', p') B_1 $. Thus, \eqref{BBB} and \eqref {3B} are valid.

\section{Sufficient condition}

The one--dimensional analog of the condition \eqref{n3'} is the boundedness of the Muckenhoupt constant \cite{M}, of the condition \eqref{n4'} --- the boundedness of the Tomaselli functional \cite[definition (11)]{Tom}, and the analogs of the constants $ B_1, \, B_2 $ are the Maz'ya--Rozin \cite[\S \, 1.3.2]{Maz} and Persson--Stepanov \cite[Theorem 3]{PS} functionals, respectively. The constants have been generalized to the scales of equivalent conditions in \cite{PSW} (see also \cite{GKPW} for the case $p\le q$). In the following theorem we find a sufficient condition for the inequality \eqref{aux} to hold, having the form \eqref{Bv}, where $B_v$ is a two--dimensional analog of the constant $ \mathcal{B}_{MR}^{(1)} (1 / r ) $ from \cite{PSW} in the one--dimensional case.

\begin{theorem} Let $1<q<p<\infty$. The inequality \eqref{aux} holds if \begin{equation}\label{Bv}{B}_v:=\biggl(\int_{\mathbb{R}^2_+} \sigma(u,z)
\biggl(\int_u^\infty \int_z^\infty
(I_2 \sigma)^{q-1}w\biggr)
^{\frac{r}{q}}\,{d}u\,{d}z\biggr)^{\frac{1}{r}}<\infty,
\end{equation}
where $C_2\lesssim {B}_v$.
\end{theorem}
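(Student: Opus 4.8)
The plan is to reduce the assertion to the characterization already in hand rather than to argue \eqref{aux} from scratch. By Theorem \ref{aga}, for $1<q<p<\infty$ one has $C_2\approx B_1$, so the desired bound $C_2\lesssim B_v$ is equivalent to the purely weight-theoretic inequality $B_1\le c\,B_v$ with $c=c(p,q)$: once this is shown, Theorem \ref{aga} yields $C_2\le\mathbf C_{\beta,\beta'}B_1\le c\,\mathbf C_{\beta,\beta'}B_v$, which is the claim. The reduction loses nothing, since conversely $B_1\approx C_2\lesssim B_v$ forces $B_1\le cB_v$. Thus the whole problem becomes the estimate of the Maz'ya--Rozin-type functional $B_1$ by the functional $B_v$.

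To prove $B_1\le cB_v$ I would imitate exactly the scheme the paper already uses to establish $B_2\le\boldsymbol\beta(p,q)B_1$ and $B_3\le\boldsymbol\beta(q',p')B_1$ (the computation following \eqref{3B}): decompose $\mathbb R^2_+$ into the level sets $\omega_k:=\{I_2\sigma>2^k\}$, on which $I_2\sigma\approx 2^k$, and sum the resulting geometric series by Proposition \ref{propGHS}. The analytic heart is the elementary Fubini rearrangement
$$\int_{\mathbb R^2_+}\bigl(I_2\sigma\bigr)^q w=\int_{\mathbb R^2_+}\sigma(u,z)\Bigl(\int_u^\infty\int_z^\infty\bigl(I_2\sigma\bigr)^{q-1}w\Bigr)\,du\,dz,$$
together with a Lyapunov (interpolation) inequality on the measure $d\mu=\sigma\,du\,dz$: writing $M:=I_2^\ast\bigl(w(I_2\sigma)^{q-1}\bigr)$, the exponent identity $\tfrac1{p'}=\tfrac1r+\tfrac1{q'}$ (which also gives $p'\le r$ and $q\le r$) yields $\int M^{p'}d\mu\le\bigl(\int M^{r/q}d\mu\bigr)^{p'/r}\bigl(\int M\,d\mu\bigr)^{(q-1)p'/q}$, i.e. a conversion between the power $q-1$ carried by $B_v$ and the powers produced when the mixed differential $d_x d_y[I_2^\ast w]^{r/q}$ defining $B_1$ is expanded. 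A scaling check supports the reduction: under $\sigma\mapsto\lambda\sigma$ both $B_1$ and $B_v$ carry the factor $\lambda^{1/p'}$, and under $w\mapsto\lambda w$ both carry $\lambda^{1/q}$, so $B_1\le cB_v$ is dimensionally consistent.

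The step I expect to be the main obstacle is passing from the second-order Stieltjes measure $d_x d_y[I_2^\ast w]^{r/q}$ that defines $B_1$ to the first-order measure $\sigma\,dx\,dy=d_x d_y[I_2\sigma]$ underlying $B_v$. Expanding the former produces, besides the diagonal term, a genuine cross term of the type already met in Lemma \ref{lem1} (the product of the two partial integrals, with a prefactor $\tfrac rq-1$ of variable sign), and it is precisely this cross term that the interpolation step above must absorb; tracking its sign according as $r/p\ge1$ or $r/p<1$ is what would force the argument into cases, exactly as in \eqref{3B}. Should this reduction prove awkward, an entirely self-contained alternative is to mimic the sufficiency half of Theorem \ref{aga} directly: first prove a local, $B_v$-flavoured analogue of Lemma \ref{lem1}, bounding $\mathbf V_{(a,b)\times(c,d)}$ by the corresponding density of $B_v$, and then feed it into the very same discretization used there for $q<p$ (the splitting into $I$ and $II$, the rectangles $R_j^k,\widetilde S_j^k,\widetilde U_i^l$, and Proposition \ref{propGHS}); in this route the entire difficulty again localizes in the $B_v$-analogue of Lemma \ref{lem1}.
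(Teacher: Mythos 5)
Your proposal is a strategy outline rather than a proof: both routes you describe hinge on a key inequality that you never establish. In the first route everything reduces to the purely weight--theoretic bound $B_1\le c\,B_v$. The reduction itself is logically sound (given Theorem \ref{aga}, $C_2\lesssim B_v$ is indeed equivalent to $B_1\lesssim B_v$), but that inequality \emph{is} the entire content of the theorem in this formulation, and your sketch does not prove it. The Lyapunov/H\"older step you write down is a correct application of H\"older with exponents $r/p'$ and $r$ applied to... actually with $1/\alpha=p'/r$, $1/\beta=p'/q'$, and it yields the global relation
\begin{equation*}
\int_{\mathbb{R}^2_+}\sigma\,\Bigl[I_2^\ast\bigl(w(I_2\sigma)^{q-1}\bigr)\Bigr]^{p'}\le B_v^{p'}\Bigl(\int_{\mathbb{R}^2_+}(I_2\sigma)^q w\Bigr)^{p'/q'},
\end{equation*}
which relates a $B_3$--type quantity to $B_v$ and the numerator of $A_2$ --- but it says nothing about $B_1$, whose definition involves the mixed second--order Stieltjes differential $d_xd_y[I_2^\ast w]^{r/q}$. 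Passing from that measure (diagonal term $\frac rq(I_2^\ast w)^{\frac rq-1}w$ plus the cross term) to the first--order density $\sigma\,I_2^\ast((I_2\sigma)^{q-1}w)^{r/q}$ appearing in $B_v$ is exactly the step you yourself flag as ``the main obstacle,'' and you leave it unresolved; the scaling check and the analogy with the $B_2\le\boldsymbol\beta(p,q)B_1$ computation do not substitute for it. The same gap reappears in your fallback route, where the ``$B_v$--flavoured analogue of Lemma \ref{lem1}'' is named but not proved.

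For comparison, the paper does not go through $B_1$ at all and needs no local analogue of Lemma \ref{lem1}. It reuses Sawyer's rectangles but replaces $R_j^k$ by $Q_j^k=(0,x_j^k)\times(0,y_j^k)$, sets $\widetilde E_j^k:=E_j^k\cup\bigl(\widetilde S_j^k\cap(\Omega_{k+2}\setminus\Omega_{k+3})\bigr)$ so that the $I$/$II$ splitting disappears, and after the level--set argument in $l$ arrives directly at
\begin{equation*}
\sum_{k,j}3^{kq}\bigl|\widetilde E_j^k\bigr|_w\lesssim\int_{\mathbb{R}^2_+}g^q\sigma\;I_2^\ast\bigl((I_2\sigma)^{q-1}w\bigr)\le B_v^q\Bigl(\int_{\mathbb{R}^2_+}g^p\sigma\Bigr)^{q/p},
\end{equation*}
the last step being a single H\"older inequality with exponents $p/q$ and $r/q$. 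If you want to salvage your approach, you must either carry out the direct proof of $B_1\le c\,B_v$ (which the paper avoids, presumably because it is at least as hard as the theorem itself) or work out the discretization along the lines just indicated.
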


\begin{proof}[Proof] We apply Sawyer's scheme of partitioning $ \mathbb{R}_+^2 $ into rectangles from the proof of the sufficiency in Theorem \ref{aga}. Compared to Figure 1, Figure 2 below has a rectangle $Q_j^k=(0,x_j^k)\times(0,y_j^k)$ added.

{\setlength{\unitlength}{0.0105in}{
\begin{picture}(350,350)
\put(10,10){\vector(0,1){320}}
\put(10,10){\vector(1,0){380}}

{\linethickness{.36mm}
\qbezier(40,270)(80,45)(300,25)
\qbezier(70,320)(130,60)(380,35)}

\put(60,0){\small $x_1^k$}
\put(120,0){\small $x_2^k$}
\put(200,0){\small $x_3^k$}

\put(-3,43){\small $y_3^k$}
\put(-3,94){\small $y_2^k$}
\put(-3,182){\small $y_1^k$}

\put(168,150){$\widetilde{\bf S}_{\boldsymbol{2}}^{\boldsymbol{k}}$}
\put(170,73){$\widetilde{\bf R}_{\boldsymbol{2}}^{\boldsymbol{k}}$}
\put(280,180){${\bf T}_{\boldsymbol{2}}^{\boldsymbol{k}}$}
\put(65,60){${\bf Q}_{\boldsymbol{2}}^{\boldsymbol{k}}$}

{\linethickness{.0001mm}
\put(124,47){\line(1,1){50}}\put(132,47){\line(1,1){50}}\put(140,47){\line(1,1){50}}\put(148,47){\line(1,1){50}}\put(156,47){\line(1,1){48}}\put(164,47){\line(1,1){40}}
\put(172,47){\line(1,1){32}}\put(180,47){\line(1,1){25}}
\put(188,47){\line(1,1){16}}\put(195,47){\line(1,1){10}}

\put(124,55){\line(1,1){42}}\put(124,63){\line(1,1){34}}\put(124,71){\line(1,1){26}}\put(124,79){\line(1,1){18}}\put(124,87){\line(1,1){10}} }

{\linethickness{.0001mm}
\put(122,184){\line(1,-1){80}}\put(130,184){\line(1,-1){72}}\put(138,184){\line(1,-1){64}}\put(146,184){\line(1,-1){56}}\put(154,184){\line(1,-1){48}}\put(162,184){\line(1,-1){40}}
\put(170,184){\line(1,-1){32}}\put(178,184){\line(1,-1){25}}
\put(186,184){\line(1,-1){16}}\put(193,184){\line(1,-1){10}}

\put(124,174){\line(1,-1){77}}\put(124,166){\line(1,-1){68}}\put(124,158){\line(1,-1){60}}\put(124,150){\line(1,-1){52}}\put(124,142){\line(1,-1){44}}\put(124,134){\line(1,-1){36}}
\put(124,126){\line(1,-1){28}}\put(124,118){\line(1,-1){20}}
\put(124,110){\line(1,-1){12}} }

{\linethickness{.0001mm}
\put(205,107){\line(1,-1){10}}\put(205,117){\line(1,-1){20}}\put(205,127){\line(1,-1){30}}\put(205,137){\line(1,-1){40}}\put(205,147){\line(1,-1){50}}\put(205,157){\line(1,-1){60}}
\put(205,167){\line(1,-1){70}}\put(205,177){\line(1,-1){80}}
\put(205,187){\line(1,-1){90}}\put(205,197){\line(1,-1){100}}
\put(205,207){\line(1,-1){110}}\put(205,217){\line(1,-1){120}}\put(205,227){\line(1,-1){130}}\put(205,237){\line(1,-1){140}}
\put(205,247){\line(1,-1){150}}\put(205,257){\line(1,-1){160}}
\put(205,267){\line(1,-1){170}}\put(205,277){\line(1,-1){170}}\put(205,287){\line(1,-1){170}}\put(205,297){\line(1,-1){170}}
\put(205,307){\line(1,-1){170}}\put(205,317){\line(1,-1){170}}
\put(205,327){\line(1,-1){170}}
}

{\linethickness{.0001mm}
\put(12,10){\line(1,1){86}}\put(22,10){\line(1,1){86}}
\put(32,10){\line(1,1){86}}\put(42,10){\line(1,1){82}}
\put(52,10){\line(1,1){72}}\put(62,10){\line(1,1){62}}
\put(72,10){\line(1,1){52}}\put(82,10){\line(1,1){42}}
\put(92,10){\line(1,1){32}}\put(102,10){\line(1,1){22}}
\put(112,10){\line(1,1){12}}

\put(10,18){\line(1,1){78}}\put(10,28){\line(1,1){68}}
\put(10,38){\line(1,1){58}}\put(10,48){\line(1,1){48}}
\put(10,58){\line(1,1){38}}\put(10,68){\line(1,1){28}}
\put(10,77){\line(1,1){19}}\put(10,86){\line(1,1){10}}
}
\put(65,97){\line(0,1){230}}
\multiput(65,10)(0,8){11}{\line(0,1){5}}

\put(10,184){\line(1,0){114}}
\multiput(124,184)(8.5,0){10}{\line(1,0){5}}

\put(124,10){\line(0,1){175}}
\multiput(124,185)(0,8){18}{\line(0,1){5}}

\put(10,97.3){\line(1,0){198}}

\put(204,10){\line(0,1){174}}
\multiput(204,184)(0,8){18}{\line(0,1){5}}

\put(124,47){\line(1,0){190}}

\multiput(310,47)(8.5,0){7}{\line(1,0){5}}
\multiput(10,47)(8.5,0){17}{\line(1,0){5}}

\multiput(210,97.3)(8.5,0){20}{\line(1,0){5}}

\put(385,38){\small $\partial\Omega_{k+1}$}
\put(310,25){\small $\partial\Omega_{k}$}

\put(203,-15){\scriptsize\rm Fig. 2}
\end{picture}}}\vspace{8mm}

Denote $\widetilde{E}_j^k:=E_j^k\cup \bigl(\widetilde{S}_j^k\cap(\Omega_{k+2}-\Omega_{k+3})\bigr)$. Then (see \eqref{20})
\begin{eqnarray}\label{20r}
\int_{\mathbb{R}^2_+} (I_2f)^qw\approx\sum_{k,j}
3^{kq}\big|\widetilde{E}_j^k\big|_w.\end{eqnarray}

Put $ g \sigma: = f $ and write
\begin{equation}\label{100r} \sum_{k,j}
3^{kq}\big|\widetilde{E}_j^k\big|_w=
\sum_{k,j} \bigl|\widetilde{E}_j^k\bigr|_w\biggl(\iint_{Q_j^k}f\biggr)^q=\sum_{k,j}
\bigl|\widetilde{E}_j^k\bigr|_w\bigl|Q_j^k\bigr|_{\sigma}^q\biggl(\frac{1}{\bigl|Q_j^k\bigr|_{\sigma}}
\iint_{Q_j^k}g\sigma\biggr)^q.\end{equation} For an integer $ l $ by $ \Gamma_l $ we denote the set of pairs $ (k, j) $ such that
$\bigl|\widetilde{E}_j^k\bigr|_w>0$ and
\begin{eqnarray*}\label{110r}
2^l<\frac{1}{\bigl|Q_j^k\bigr|_{\sigma}}
\iint_{Q_j^k}g\sigma\leq 2^{l+1},\qquad(k,j)\in\Gamma_l.
\end{eqnarray*}
By analogy with how it was done in the proof of \cite[Theorem 1A]{Saw1}, we show that \begin{eqnarray*}\label{120'r}
2^{l-1}<\frac{1}{\bigl|Q_j^k\bigr|_{\sigma}}
\iint_{Q_j^k}g \sigma \chi_{\{g>2^{l-1}\}},\qquad \textrm{for}\
\textrm{all}\ \ j,k.\end{eqnarray*} Indeed, this follows from the fact that
\begin{eqnarray*}2^{l}<
\frac{1}{\bigl|Q_j^k\bigr|_{\sigma}}
\iint_{Q_j^k}g\sigma=
\frac{1}{\bigl|Q_j^k\bigr|_{\sigma}}\biggl[
\iint_{Q_j^k\cap\{g>2^{l-1}\}}g\sigma +
\iint_{Q_j^k\cap\{g\le 2^{l-1}\}}g\sigma\biggr]\\
\le\frac{1}{\bigl|Q_j^k\bigr|_{\sigma}}
\iint_{Q_j^k\cap\{g>2^{l-1}\}}g\sigma+2^{l-1}.
\end{eqnarray*}
Further, we write for fixed $ l $:
\begin{align*}
\sum_{(k,j)\in\Gamma_l} \bigl|\widetilde{E}_j^k\bigr|_w\bigl|Q_j^k\bigr|_{\sigma}^q\overset{\eqref{120'r}}\lesssim &
2^{-l}\sum_{(k,j)\in\Gamma_l}\bigl|\widetilde{E}_j^k\bigr|_w\bigl|Q_j^k\bigr|_{\sigma}^{q-1}\iint_{Q_j^k}g\sigma \chi_{\{g>2^{l-1}\}}\nonumber\\
\le& 2^{-l}\sum_{(k,j)\in\Gamma_l}\int_{\widetilde{E}_j^k}w(x,y)\bigl[I_2\sigma(x,y)\bigr]^{q-1}\biggl(\int_0^x\int_0^y
g\sigma \chi_{\{g>2^{l-1}\}}\biggr){d}x\,{d}y.\end{align*}
Combining the last estimate and \eqref{100r}, we obtain
\begin{align*}\sum_{k,j} 3^{kq}\bigl|\widetilde{E}_j^k\bigr|_w&\lesssim  \sum_l 2^{lq}\sum_{(k,j)\in\Gamma_l} \bigl|\widetilde{E}_j^k\bigr|_w\bigl|
Q_j^k\bigr|_{\sigma}^q\\
&\le\sum_l 2^{l(q-1)}\sum_{(k,j)\in\Gamma_l}\int_{\widetilde{E}_j^k}w(x,y)\bigl[I_2\sigma(x,y)\bigr]^{q-1}\biggl(\int_0^x\int_0^y
g\sigma \chi_{\{g>2^{l-1}\}}\biggr){d}x\,{d}y \nonumber\\&=\sum_{k,j}
\int_{\widetilde{E}_j^k}w(x,y)\bigl[I_2\sigma(x,y)\bigr]^{q-1}\biggl(\int_0^x\int_0^y
g\sigma \Bigl[\sum_l 2^{l(q-1)}\chi_{\{g>2^{l-1}\}}\Bigr]\biggr){d}x\,{d}y. \nonumber
\end{align*} Since $2^{l_0-1}<g(s,t)\le 2^{l_0}$ almost everywhere for fixed $(s,t)$ then $g(s,t)>2^{l-1}$ for $l\le l_0$ and, therefore,
$$\sum_l 2^{l(q-1)}\chi_{\{g>2^{l-1}\}}=\sum_{l\le l_0} 2^{l(q-1)}=2^{l_0(q-1)}\sum_{l\le l_0} 2^{(l-l_0)(q-1)}
\approx 2^{l_0(q-1)}.$$
From this and H\"older's inequalities with exponents $ p / q $ and $ r / q $, we find that
\begin{align}\label{aux'''}\sum_{k,j} 3^{kq}\bigl|\widetilde{E}_j^k\bigr|_w\lesssim&\sum_{k,j}
\int_{{E}_j^k}w(x,y)\bigl[I_2\sigma(x,y)\bigr]^{q-1}\biggl(\int_0^x\int_0^y g^q(s,t)\sigma(s,t)\,{d}s\,{d}t
\biggr){d}x\,{d}y\nonumber\\
=&\int_{\mathbb{R}^2_+}
w(x,y)\bigl[I_2\sigma(x,y)\bigr]^{q-1}\biggl(\int_0^x\int_0^y
g^q(s,t)\sigma(s,t)\,{d}s\,{d}t\biggr)
{d}x\,{d}y\nonumber\\
=&\int_{\mathbb{R}^2_+}g^q(s,t)\sigma(s,t)\biggl(\int_s^\infty\int_t^\infty w(x,y)
\bigl[I_2\sigma(x,y)\bigr]^{q-1}\,{d}x\,{d}y\biggr) {d}s\,{d}t\nonumber\end{align} \begin{align}
\le&\biggl(\int_{\mathbb{R}^2_+}g^p\sigma\biggr)^{\frac{q}{p}}
\biggl(\int_{\mathbb{R}^2_+}\sigma(s,t)\biggl(\int_s^\infty\int_t^\infty(I_2\sigma)^{q-1}w\biggr)^{\frac{r}{q}}\, {d}s\,{d}t\biggr)^{\frac{q}{r}}\nonumber\\=&{B}_v^q\biggl(\int_{\mathbb{R}^2_+}g^p\sigma\biggr)^{\frac{q}{p}},\end{align} since the sets $\widetilde{E}_j^k$ are disjoint and $g^p\sigma=f^pv$.
The estimates \eqref{20r} and \eqref{aux'''} imply the validity of \eqref{aux} for all $ f $ from the subclass $ M.$\qed \end{proof}

There is also a dual statement of the last theorem with the functional
  \[{B}_w:=\biggl(\int_{\mathbb{R}^2_+} w(u,z)
\biggl(\int_0^u \int_0^z
(I_2^\ast w)^{p'-1}\sigma\biggr)
^{\frac{r}{p'}}\,{d}u\,{d}z\biggr)^{\frac{1}{r}}\] instead of $B_v$. The proof of this fact is similar and can be carried out through the operator $I_2^\ast f$.

If the weights $ v $ and $ w $ are factorizable, then the condition $ B _v <\infty $ (or $ B_w <\infty $) is necessary and sufficient for the \eqref{aux} to be true
in the case of $ 1 <q <p <\infty $, moreover $ C_2 \approx {B} _v \approx B_w $.

\section{Multidimensional case with factorizable weights}
It was established by A. Wedestig in \cite{W1} (see also \cite{B5}) for the case $ n = 2 $ that if the weight function $ v $ in \eqref{n2}
 is factorizable, that is, $ v (x_1, x_2) = v_1 (x_1) v_2 (x_2) $, then it is possible to characterize the inequality \eqref{n2} by only
one functional for all $1<p\le q<\infty$.

\begin{theorem}{\rm\cite[Theorem 1.1]{B5}}\label{T2-mult}
Let $n=2,$ $1<p\le q<\infty,$ $s_1,s_2\in (1,p)$ and
$v(x_1,x_2)=v_1(x_1)v_2(x_2).$ Then the inequality \eqref{n2}
holds for all $f\ge 0$ if and only if
\begin{multline*}A_W(s_1,s_2)\colon=\sup_{(t_1,t_2)\in\mathbb{R}_+^2}\bigl[I_1\sigma_1(t_1)\bigr]
^{\frac{s_1-1}{p}}\bigl[I_1\sigma_2(t_2)\bigr]^{\frac{s_2-1}{p}}\\\times\biggl(\int_{t_1}^\infty\int_{t_2}^\infty
\bigl(I_1\sigma_1\bigr)^{\frac{q(p-s_1)}{p}}\bigl(I_1\sigma_2\bigr)^{\frac{q(p-s_2)}{p}}w\,
\biggr)^{\frac{1}{q}}<\infty,
\end{multline*} where $\sigma_i:=v_i^{1-p'}$, $i=1,2$. Moreover, $C_2\approx A_W(s_1,s_2)$ with equivalence constants dependent on parameters $p$, $q$ and $s_1$, $s_2$ only.\end{theorem}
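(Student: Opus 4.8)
The plan is to exploit the factorization $\sigma=v^{1-p'}=\sigma_1\sigma_2$, which reduces the genuinely two–dimensional operator to a one–dimensional analysis after a single Hölder step. Throughout write $U_i(y_i):=I_1\sigma_i(y_i)=\int_0^{y_i}\sigma_i$, so that $dU_i=\sigma_i\,dy_i$, the change of variables $u_i=U_i(y_i)$ is available, and $I_2\sigma(x_1,x_2)=U_1(x_1)U_2(x_2)$. I would first reduce by monotone convergence to nice $f$ and substitute $f=g\sigma_1\sigma_2$; since $\sigma_i^pv_i=\sigma_i$ one has $\int f^pv=\int g^p\sigma_1\sigma_2$.

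\textbf{Sufficiency.} The key step is a single application of Hölder's inequality to the inner double integral against the product measure $\sigma_1\sigma_2\,dy_1dy_2$, after inserting the factor $U_1^{a}U_2^{b}$ with $a:=(s_1-1)/p$ and $b:=(s_2-1)/p$. Because $ap',bp'<1$ exactly when $s_1,s_2<p$, the second Hölder factor is integrable and is computed explicitly through $u_i=U_i(y_i)$, giving
\[ I_2f(x)\le c\,U_1(x_1)^{(1-ap')/p'}U_2(x_2)^{(1-bp')/p'}\,\Phi(x)^{1/p}, \]
where $\Phi(x):=\int_0^{x_1}\int_0^{x_2}g^p\,U_1^{ap}U_2^{bp}\,\sigma_1\sigma_2$ and, by a short computation, $(1-ap')/p'=(p-s_i)/p$ for the respective index. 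Raising to the power $q$ and integrating against $w$ collapses the whole estimate to the weighted inequality
\[ \int_{\mathbb{R}_+^2}W\,\Phi^{q/p}\le C\Bigl(\int_{\mathbb{R}_+^2}g^p\sigma_1\sigma_2\Bigr)^{q/p},\qquad W:=U_1^{q(p-s_1)/p}U_2^{q(p-s_2)/p}\,w. \]

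Since $p\le q$, this last inequality is an $L^1(\sigma_1\sigma_2)\to L^{q/p}(W)$ bound with exponent $q/p\ge1$ for the operator $h\mapsto\int_0^{x_1}\int_0^{x_2}h\,U_1^{ap}U_2^{bp}\sigma_1\sigma_2$ applied to $h=g^p$. For target exponent $\ge1$ the $L^1\to L^r$ Hardy inequality is characterized by the single supremum $\sup_y U_1(y_1)^{ap}U_2(y_2)^{bp}\bigl(\int_{y_1}^\infty\int_{y_2}^\infty W\bigr)^{p/q}$ (the $L^1$ norm on the source makes approximate point masses extremal), and using $ap=s_i-1$ one checks that this supremum is \emph{exactly} $A_W(s_1,s_2)^p$. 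Combining the two displays gives $C_2\le c\,A_W(s_1,s_2)$ with $c$ depending only on $p,q,s_1,s_2$; the degenerate case $p=q$ (i.e. $r=1$) follows at once by Fubini.

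\textbf{Necessity.} For the reverse bound I would test \eqref{n2} with the adapted function
\[ f_{t_1,t_2}(y_1,y_2)=\sigma_1(y_1)U_1(y_1)^{-s_1/p}\sigma_2(y_2)U_2(y_2)^{-s_2/p}\,\chi_{(t_1,\infty)}(y_1)\chi_{(t_2,\infty)}(y_2). \]
The lower cut–off makes $\int f^pv$ finite, the $U_i$–integral converging at infinity precisely because $-s_i<-1$, and equals $c\,U_1(t_1)^{1-s_1}U_2(t_2)^{1-s_2}$; meanwhile, on the region $\{U_i(x_i)\ge 2U_i(t_i)\}$ the subtracted boundary term is negligible and $I_2f(x)\gtrsim U_1(x_1)^{(p-s_1)/p}U_2(x_2)^{(p-s_2)/p}$, which reproduces the weighted tail integral of $A_W$. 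Dividing the two estimates yields the $A_W(s_1,s_2)$ integrand with the tail starting at a bounded dilate of $(t_1,t_2)$ in the $U$–scale; a routine doubling argument over dyadic levels of $U_1,U_2$ fills this gap and delivers $A_W(s_1,s_2)\lesssim C_2$.

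The main obstacle is the sufficiency reduction: recognizing that the one Hölder step with parameters $a,b$ tied to $s_1,s_2$ linearizes the rectangular operator into an $L^1\to L^{q/p}$ Hardy inequality whose Muckenhoupt–type criterion coincides with $A_W(s_1,s_2)^p$, and noting that the non–factorizable weight $w$ causes no trouble because only $\sigma$ needs to split for the Hölder estimate while $w$ is simply absorbed into $W$. Everything else — the explicit inner integrals via $u_i=U_i$, the convergence bookkeeping, and the gap–filling in the necessity — is routine, and the hypothesis $s_i\in(1,p)$ is exactly what keeps all the relevant exponents in their admissible ranges.
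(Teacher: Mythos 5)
The paper does not prove Theorem \ref{T2-mult} at all: it is quoted verbatim from Wedestig \cite{B5} as background for \S\,4, so there is no in-house argument to compare yours against. Judged on its own, your proof is correct and is essentially the standard route to criteria of Persson--Stepanov--Wedestig type, transplanted to the rectangular setting. The sufficiency step is sound: writing $f=g\sigma_1\sigma_2$, inserting $U_1^{a}U_2^{b}$ with $a=(s_1-1)/p$, $b=(s_2-1)/p$ and applying H\"older with exponents $p,p'$ uses $s_i<p$ exactly to make $\int_0^{x_i}U_i^{-ap'}\sigma_i=(1-ap')^{-1}U_i(x_i)^{1-ap'}$ finite, and the identity $(1-ap')/p'=(p-s_i)/p$ checks out. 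The reduction to an $L^1(\sigma_1\sigma_2)\to L^{q/p}(W)$ bound is then settled by Minkowski's integral inequality, which is where $p\le q$ enters, and the resulting supremum is indeed $A_W(s_1,s_2)^p$ after substituting $ap=s_i-1$. The necessity computation with $f_{t}= \prod_i\sigma_iU_i^{-s_i/p}\chi_{(t_i,\infty)}$ gives $\int f^pv\le\prod_i U_i(t_i)^{1-s_i}/(s_i-1)$ (here $s_i>1$ is used) and $I_2f\gtrsim\prod_iU_i(x_i)^{(p-s_i)/p}$ on $\{U_i(x_i)\ge2U_i(t_i)\}$, and the dyadic gap-filling converges precisely because the geometric series $\sum_j2^{-j(s_i-1)q/p}$ is summable for $s_i>1$; this step tacitly uses the continuity of $U_i$ to choose points with $U_i(t_i^{(j)})=2^{j-1}U_i(t_i)$, which holds since $\sigma_i\in L^1_{\loc}$. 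The only points worth tightening in a written version are the degenerate cases $U_i(t_i)=0$ or $U_i$ constant beyond $t_i$ (handled by the usual $0\cdot\infty$ convention and by taking $t_i^{(0)}$ to the left of $t_i$), but these are routine. So the proposal delivers the full two-sided estimate $C_2\approx A_W(s_1,s_2)$ with constants depending only on $p,q,s_1,s_2$, which is what the cited theorem asserts.
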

The result of this theorem can be generalized to $ n> 2. $

A number of statements similar to \cite[Theorem 1.1]{B5} were obtained in \cite{PU} under the condition that weight functions $ v $ or $ w $ satisfy
\begin{equation}\label{n8}
v(y_1,\ldots,y_n)=v_1(y_1)\ldots
v_n(y_n)\end{equation} or
\begin{equation}\label{n8'}
w(x_1,\ldots,x_n)=w_1(x_1)\ldots
w_n(x_n).\end{equation}
\begin{theorem}\label{T4}{\rm\cite[Theorems 2.1, 2.2]{PU}}
Let $1<p\le q<\infty$ and the weight function $v$ satisfy the condition \eqref{n8}. Then the inequality \eqref{n2} holds for all $f\ge 0$ \\ {\rm (i)} if and only if $A_{M_n}<\infty$, where
\begin{equation*}A_{M_n}\colon=\sup_{(t_1,\ldots,t_n)\in\mathbb{R}_+^n}\bigl[I_n^\ast w(t_1,\ldots,t_n)\bigr]^{\frac{1}{q}}
\bigl[I_1\sigma_1(t_1)\bigr]^{\frac{1}{p'}}\ldots
\bigl[I_1\sigma_n(t_n)\bigr]^{\frac{1}{p'}};\end{equation*} {\rm (ii)}
if and only if $A_{T_n}<\infty,$ where
\begin{equation*}A_{T_n}=\sup_{(t_1,\ldots,t_n)\in\mathbb{R}_+^n}
\bigl[I_1\sigma_1(t_1)\bigr]^{-\frac{1}{p}}\ldots
\bigl[I_1\sigma_n(t_n)\bigr]^{-\frac{1}{p}}\biggl(\int_0^{t_1}\ldots\int_0^{t_n}
\bigl(I_1\sigma_1\bigr)^q\ldots\bigl(I_1\sigma_n\bigr)^qw\biggr)^{\frac{1}{q}}.\end{equation*}
Besides, $C_n\approx A_{M_n}\approx  A_{T_n}$ with equivalence constants depending on $ p, $ $ q $ and $n.$\end{theorem}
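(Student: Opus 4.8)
The plan is to prove the two characterizations by establishing $A_{M_n}\le C_n$ and $A_{T_n}\le C_n$ (necessity) together with the reverse bounds $C_n\lesssim A_{M_n}$ and $C_n\lesssim A_{T_n}$ (sufficiency); the asserted equivalences $C_n\approx A_{M_n}\approx A_{T_n}$ then follow at once. For necessity, both functionals come from a single test function. Substituting $f=\sigma\chi_{R}$ with $R=(0,t_1)\times\cdots\times(0,t_n)$ into \eqref{n2} and using the factorization $\sigma=\prod_i\sigma_i$ together with the elementary identity $\int_R\sigma^p v=\int_R\sigma=I_n\sigma(t)=\prod_iI_1\sigma_i(t_i)$, one obtains $(I_n\sigma(t))^{1/p}$ on the right-hand side; bounding the left-hand side below by integrating only over $\{x\ge t\}$ (where $I_nf=I_n\sigma(t)$ is constant) yields $A_{M_n}\le C_n$, while integrating over $R$ itself (where $I_nf=I_n\sigma$) yields $A_{T_n}\le C_n$. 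First I would record these two inequalities, after which it remains only to prove sufficiency of either functional.

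For sufficiency I would run the Sawyer-type dyadic scheme already used for Theorem~\ref{aga}: decompose $\mathbb R_+^n$ into the level sets $\Omega_k=\{I_nf>3^k\}$, cover $\Omega_{k+1}$ by corner rectangles, and reduce $\int(I_nf)^qw$ to a sum of local contributions over these rectangles. The role of the factorizability of $v$ is that on every rectangle the local estimate tensorizes: since $I_n\sigma=\prod_iI_1\sigma_i$ and $(I_n\sigma)^q=\prod_i(I_1\sigma_i)^q$, the $n$-dimensional rectangle bounds split into products of one-dimensional Hardy/Muckenhoupt--Tomaselli estimates, which is exactly what permits a \emph{single} $n$-fold supremum to control the entire sum. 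For $n=2$ there is a shortcut worth stating separately: by Sawyer's Theorem~\ref{ES} one has $C_2\approx A_1+A_2+A_3$, and factorization makes $A_1=A_{M_2}$ and $A_2=A_{T_2}$ identically, so it suffices to show that factorization forces $A_1\approx A_2\approx A_3$. For general $n$ I would instead invoke the $n$-dimensional form of the Wedestig criterion (Theorem~\ref{T2-mult}), matching $A_{M_n}$ and $A_{T_n}$ to $A_W(s_1,\ldots,s_n)$ for a convenient choice of the parameters $s_i$.

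The hard part will be the constant matching, that is, showing the single functional $A_{M_n}$ (or $A_{T_n}$) dominates the naturally occurring iterated one-dimensional constants. A warning is in order: the tempting slice-by-slice reduction --- applying the one-dimensional inequality in $x_1$ with its sharp Muckenhoupt constant $D(x_2,\ldots,x_n)$, then Minkowski's integral inequality (licit because $q\ge p$), then the one-dimensional inequality in $x_2$, and so on --- is genuinely lossy: it replaces $A_{M_n}$ by a supremum of an integral of a supremum, and one can construct weights (already for $v\equiv1$, $p=q=2$) for which this iterated constant exceeds $A_{M_2}$ by an arbitrarily large factor. Consequently the supremum over the inner cut must not be pulled through the outer integration naively; it has to be absorbed globally, by decomposing each $x_i$-axis dyadically along the level sets of $I_1\sigma_i$ and summing the resulting geometric series with the discrete Hardy inequalities of Proposition~\ref{propGHS}, the monotonicity of $I_1\sigma_i$ and $I_n^\ast w$ supplying the required geometric ratios. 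Carrying this discretization out in every variable and collapsing the products is the technical heart of the argument; once it is complete, both reverse bounds hold and the chain $A_{M_n}\approx C_n\approx A_{T_n}$ closes.
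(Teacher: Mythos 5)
The first thing to say is that the paper contains no proof of this statement: Theorem \ref{T4} sits in the survey section \S\,4 and is quoted verbatim from \cite[Theorems 2.1, 2.2]{PU} ``for completeness'', so there is no in-paper argument to compare yours against. Judged on its own merits, your necessity half is correct and complete: with $f=\sigma\chi_R$, $R=(0,t_1)\times\cdots\times(0,t_n)$, one has $\int f^pv=\int_R\sigma=I_n\sigma(t)$, and restricting the left-hand side of \eqref{n2} to $\{x\ge t\}$ and to $R$ gives $A_{M_n}\le C_n$ and $A_{T_n}\le C_n$ respectively (factorizability enters only to identify $I_n\sigma$ with $\prod_iI_1\sigma_i$). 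Your warning that the naive slice-by-slice iteration with Minkowski's inequality produces a supremum of an integral of a supremum, hence a strictly stronger condition than $A_{M_n}$, is also apt and explains why the theorem is not a triviality.

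The sufficiency half, however, is a plan rather than a proof, and the gaps are exactly where the theorem lives. For $n=2$ your reduction via Theorem \ref{ES} is sound in principle (under \eqref{n8} one has $A_1=A_{M_2}$ and $A_2=A_{T_2}$), but the decisive claims $A_2\lesssim A_1$ and $A_3\lesssim A_1$ are asserted, not proved, and they must hold uniformly down to the endpoint $p=q$, where Lemma \ref{lem1} is useless because $\alpha(p,q)=p^2(q-1)/(q-p)$ blows up. The first claim is salvageable by tensorization: when $\sigma=\sigma_1\sigma_2$ the cross term in the double integration by parts collapses, since $\partial_x(I_2\sigma)\,\partial_y(I_2\sigma)=\sigma\,I_2\sigma$, so $\mathbf{V}_{(0,t_1)\times(0,t_2)}\le q^2A_1^q\int_0^{t_1}\int_0^{t_2}(I_2\sigma)^{\frac{q}{p}-1}\sigma=p^2A_1^q\bigl[I_2\sigma(t_1,t_2)\bigr]^{\frac{q}{p}}$ with a constant bounded as $q\downarrow p$ --- but you have to write this out. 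The claim $A_3\lesssim A_1$ is more delicate and your tensorization heuristic does not apply to it at all: $A_3$ is built from $I_2^\ast w$, and $w$ is \emph{not} assumed factorizable, so the analogous cross term $\partial_x(I_2^\ast w)\,\partial_y(I_2^\ast w)$ does not collapse; you give no argument for this step. For general $n$ you lean on ``the $n$-dimensional form of the Wedestig criterion'', which the paper states only for $n=2$ and merely remarks ``can be generalized'', and you explicitly defer the discretization and constant-matching that you yourself call the technical heart. As it stands, the implication $A_{M_n}<\infty\Rightarrow\eqref{n2}$ is not established by your proposal.
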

\begin{theorem}\label{T2d}{\rm\cite[Theorems 2.4, 2.5]{PU}}
Let $ 1 <p \le q <\infty $ and the weight $ w $ satisfy the condition \eqref{n8'}. Then the inequality \eqref{n2} is true \\ {\rm (i)} if and only if $A_{M_n}^\ast<\infty$, where with $\sigma:=v^{1-p'}$
\begin{equation*}A_{M_n}^\ast\colon=\sup_{(t_1,\ldots,t_n)\in\mathbb{R}_+^n}
\bigl[I_n\sigma(t_1,\ldots,t_n)\bigl]^{\frac{1}{p'}}\bigl[I_1^\ast w_1(t_1)\bigr]^{\frac{1}{q}}\ldots
\bigl[I_1^\ast w_n(t_n)\bigr]^{\frac{1}{q}};\end{equation*} {\rm (ii)}
if and only if $A_{T_n}^\ast<\infty,$ where
\begin{equation*}A_{T_n}^\ast=\sup_{(t_1,\ldots,t_n)\in\mathbb{R}_+^n}
\bigl[I_1^\ast w_1(t_1)\bigr]^{-\frac{1}{q'}}\ldots
\bigl[I_1^\ast w_n(t_n)\bigr]^{-\frac{1}{q'}}
\biggl(\int_{t_1}^\infty\ldots\int_{t_n}^\infty
 \bigl(I_1^\ast w_1\bigr)^{p'}\ldots
\bigl(I_1^\ast w_n\bigr)^{p'}\sigma\biggr)^{\frac{1}{p'}}.\end{equation*} Besides, $C_n\approx A_{M_n}^\ast\approx A_{T_n}^\ast$ with equivalence constants depending on $ p, $ $ q $ and $n.$\end{theorem}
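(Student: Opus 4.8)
The plan is to deduce Theorem \ref{T2d} from the already established $v$-factorizable case (Theorem \ref{T4}) by duality, exploiting the fact that the adjoint of $I_n$ with respect to the unweighted pairing is $I_n^\ast$. First I would record the standard duality principle for positive integral operators: for $1<p,q<\infty$ the inequality \eqref{n2} holds with best constant $C_n$ if and only if the dual inequality
\[
\biggl(\int_{\mathbb{R}_+^n}\bigl(I_n^\ast\psi\bigr)^{p'}\sigma\biggr)^{\frac{1}{p'}}\le C_n\biggl(\int_{\mathbb{R}_+^n}\psi^{q'}W\biggr)^{\frac{1}{q'}}\qquad(\psi\ge 0)
\]
holds with the same best constant, where $\sigma=v^{1-p'}$ and $W:=w^{1-q'}$. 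This follows from the substitutions $f=Fv^{-1/p}$ and $\psi=w^{1/q}G$, which reduce \eqref{n2} to the unweighted boundedness $\|\widetilde S\|_{L^p\to L^q}$ of the operator $\widetilde SF:=w^{1/q}I_n(Fv^{-1/p})$, followed by passage to the unweighted adjoint $\widetilde S^\ast G=v^{-1/p}I_n^\ast(w^{1/q}G)$ via Fubini; the weight exponents $v^{-p'/p}=\sigma$ and $w^{-q'/q}=W$ then appear automatically.

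The point of this transformation is that the roles of the two weights are interchanged: on the right-hand side of the dual inequality the weight is now $W=w^{1-q'}=\prod_{i=1}^n w_i^{1-q'}=:\prod_{i=1}^nW_i$, which is factorizable precisely because $w$ satisfies \eqref{n8'}. Moreover the dual problem has input exponent $q'$ and output exponent $p'$ with $q'\le p'$ (since $p\le q$), so it is again a ``first exponent not exceeding the second'' Hardy inequality, now for the operator $I_n^\ast$ with factorizable \emph{input} weight. The one computation worth highlighting is that the $\sigma$-type weight of the dual problem returns the original data: since $(1-q')(1-q)=1$, one has $W_i^{1-q}=w_i^{(1-q')(1-q)}=w_i$, so the role of $v_i^{1-p'}$ in Theorem \ref{T4} is played here by $w_i$.

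Next I would transport Theorem \ref{T4} from $I_n$ to $I_n^\ast$. The cleanest device is the coordinatewise inversion $R\colon(x_1,\dots,x_n)\mapsto(1/x_1,\dots,1/x_n)$, which conjugates $I_n^\ast$ to $I_n$ after the induced change of variables; under $R$ a factorizable weight stays factorizable and every order relation $\int_{t}^\infty\leftrightarrow\int_0^{t}$ is reversed, while the Jacobian factor $\prod_i s_i^{-2}$ is itself factorizable and can be absorbed into the weights. Applying Theorem \ref{T4}(i),(ii) to the reflected dual inequality then characterizes its best constant by a Muckenhoupt-type and a Tomaselli-type functional for $I_n^\ast$.

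Finally I would translate these two functionals back through $R$ and the duality. Tracking each ingredient --- the dual operator of $I_n^\ast$ is $I_n$ applied to the output weight $\sigma$; the one-dimensional pieces become $I_1^\ast w_i$ (using $W_i^{1-q}=w_i$); the input/output exponents $p,q$ are replaced by $q',p'$; and the region $\int_0^{t}$ becomes $\int_t^\infty$ --- the Muckenhoupt functional $A_{M_n}$ turns into exactly $A_{M_n}^\ast=\sup[I_n\sigma]^{1/p'}\prod_i[I_1^\ast w_i]^{1/q}$, and the Tomaselli functional $A_{T_n}$ into exactly $A_{T_n}^\ast$. Chaining the equivalences $C_n\approx(\text{dual best constant})\approx A_{M_n}^\ast\approx A_{T_n}^\ast$, with constants depending only on $p,q,n$, completes the proof. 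The main obstacle is the bookkeeping in the reflection step: verifying that $R$ really conjugates $I_n^\ast$ to $I_n$ with factorizable transformed weights and that the equivalence constants remain independent of the weights; by contrast the duality reduction and the exponent identity $(1-q')(1-q)=1$ are routine once set up.
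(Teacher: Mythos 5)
Your proposal is correct, and it is worth noting at the outset that the paper itself contains no proof of Theorem \ref{T2d}: the statement is quoted from \cite{PU} (Theorems 2.4, 2.5), where the result is likewise obtained as the dual companion of the $v$-factorizable case, so your route --- duality plus transfer of Theorem \ref{T4} to $I_n^\ast$ --- is essentially the intended argument rather than a genuinely different one. The two computations on which everything hinges both check out. First, since $I_n$ has a non-negative kernel, duality with respect to the unweighted pairing (your substitution $\widetilde SF=w^{1/q}I_n(Fv^{-1/p})$ followed by Fubini) converts \eqref{n2}, with the same best constant, into $\bigl(\int_{\mathbb{R}_+^n}(I_n^\ast\psi)^{p'}\sigma\bigr)^{1/p'}\le C_n\bigl(\int_{\mathbb{R}_+^n}\psi^{q'}W\bigr)^{1/q'}$ with $W=w^{1-q'}=\prod_i w_i^{1-q'}$ factorizable and $1<q'\le p'<\infty$, so Theorem \ref{T4} is applicable in the dual variables. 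Second, the inversion $R(x)=(1/x_1,\dots,1/x_n)$ does conjugate $I_n^\ast$ to $I_n$, the Jacobian $\prod_i s_i^{-2}$ being absorbed into still-factorizable weights: writing $\widetilde W_i(s)=w_i(1/s)^{1-q'}s^{2(q'-1)}$ for the reflected input weights, the identity $(1-q')(1-q)=1$ gives $\widetilde W_i^{\,1-q}(s)=w_i(1/s)s^{-2}$, hence $I_1\bigl(\widetilde W_i^{\,1-q}\bigr)(t)=I_1^\ast w_i(1/t)$ and, for the reflected output weight, $I_n^\ast\widetilde\sigma(t)=I_n\sigma(1/t_1,\dots,1/t_n)$; consequently $A_{M_n}$ and $A_{T_n}$ of Theorem \ref{T4} transform exactly into $A_{M_n}^\ast$ and $A_{T_n}^\ast$ after relabelling $t\mapsto(1/t_1,\dots,1/t_n)$. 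Since both duality and reflection preserve the best constant exactly, the equivalence constants inherited from Theorem \ref{T4} depend only on $p$, $q$ and $n$, as claimed. One could avoid the reflection by re-running the proof of Theorem \ref{T4} mutatis mutandis for $I_n^\ast$, but your version is cleaner in its bookkeeping; I see no gap.
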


\begin{theorem}\label{Tq1}{\rm\cite[Theorems 3.1, 3.2]{PU}}
Let $1<q<p<\infty$. Suppose that the weight function $v$ in \eqref{n2} satisfies the condition \eqref{n8} and $I_1\sigma_1(\infty)=\ldots=I_1\sigma_n(\infty)=\infty.$
Then \eqref{n2} is valid for all $f\ge 0$ on $\mathbb{R}^n_+$ with $C_n<\infty$ independent of functions $f$
\\ {\rm (i)} if and only if $B_{MR_n}<\infty$, where
\begin{equation*}B_{MR_n}:=
\biggl(\int_{\mathbb{R}^n_+} \bigl[I_n^\ast w(t_1,\ldots,t_n)\bigr]^{\frac{r}{q}}
\bigl[I_1\sigma_1(t_1)\bigr]^{\frac{r}{q'}}\sigma_1(t_1)\ldots \bigl[I_1\sigma_n(t_n)\bigr]^{\frac{r}{q'}}\sigma_n(t_n)\,{d}t_1\ldots \,{d}t_n\biggr)^{\frac{1}{r}};\end{equation*} {\rm (ii)}
if and only if $B_{PS_n}<\infty$, where
\begin{multline*}B_{PS_n}:=
\biggl(\int_{\mathbb{R}^n_+}\biggl(\int_0^{t_1}\ldots\int_0^{t_n}
\bigl[I_1\sigma_1(t_1)\bigr]^q\ldots \bigl[I_1\sigma_n(t_n)\bigr]^qw(x_1,\ldots, x_n)\,{d}x_1\ldots\,dx_n\biggr)
^{\frac{r}{q}}\biggr.\\\quad\times\biggl.\bigl[I_1\sigma_1(t_1)\bigr]^{-\frac{r}{q}}\sigma_1(t_1)\ldots
\bigl[I_1\sigma_n(t_n)\bigr]^{-\frac{r}{q}}\sigma_n(t_n) \,{d}t_1\ldots\,dt_n\biggr)^{\frac{1}{r}}.\end{multline*}
Moreover, $C_n\approx B_{MR_n}\approx B_{PS_n}$ with equivalence constants dependent on $ p, $ $ q $ and $n.$
\end{theorem}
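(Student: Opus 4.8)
The plan is to argue by induction on the dimension $n$, reducing the $n$-dimensional problem to the one-dimensional weighted Hardy inequality in the regime $1<q<p<\infty$. The base case $n=1$ is classical: the inequality \eqref{n2} with $n=1$ holds if and only if the Maz'ya--Rozin functional $B_{MR_1}$ is finite, which yields (i), and if and only if the Persson--Stepanov functional $B_{PS_1}$ is finite, which yields (ii), while the two are equivalent, $B_{MR_1}\approx B_{PS_1}$, with constants depending only on $p,q$ (see \cite[\S\,1.3.2]{Maz}, \cite[Theorem 3]{PS}, \cite{PSW}). The whole point of assuming $v$ to be factorizable is that then $\sigma=v^{1-p'}=\prod_{i=1}^n\sigma_i$ factorizes as well, so that $I_n\sigma=\prod_{i=1}^n I_1\sigma_i$; this is exactly what collapses the product functionals into single--block form, turning $B_{MR_n}$ into $\bigl(\int_{\mathbb{R}^n_+}[I_n^\ast w]^{\frac{r}{q}}[I_n\sigma]^{\frac{r}{q'}}\sigma\bigr)^{\frac{1}{r}}$ and $B_{PS_n}$ into $\bigl(\int_{\mathbb{R}^n_+}[I_n\sigma]^{-\frac{r}{q}}(\int_0^{t}(I_n\sigma)^q w)^{\frac{r}{q}}\sigma\bigr)^{\frac{1}{r}}$, i.e.\ the literal $n$-dimensional counterparts of $B_1$ and $B_2$ from \S\,2.

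For the inductive step I would peel off the last variable. Writing $x=(x',x_n)$ and $y=(y',y_n)$, one has $I_nf(x)=\int_0^{x_n}\bigl(I_{n-1}f(\cdot,y_n)\bigr)(x')\,dy_n$, so \eqref{n2} may be read as a one-dimensional Hardy inequality in $x_n$ whose integrand is an $(n-1)$-dimensional norm of $I_{n-1}f(\cdot,y_n)$; since $v$ splits as $v(y)=\bigl(\prod_{i<n}v_i\bigr)v_n(y_n)$, the variable $y_n$ separates cleanly on the right-hand side. Applying the one-dimensional Maz'ya--Rozin criterion in $x_n$ with the frozen data supplying the remaining factors, and then invoking the induction hypothesis for the resulting $(n-1)$-dimensional inequality, reproduces precisely $B_{MR_n}$; Fubini's theorem and the factorization $I_n\sigma=I_1\sigma_n\cdot\prod_{i<n}I_1\sigma_i$ are what let the two steps be assembled. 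Sufficiency can alternatively be obtained directly by the substitution $f=g\sigma$ (under which $\int f^pv=\int g^p\sigma$), followed by the Sawyer level--set decomposition of $\{I_nf>3^k\}$ used in the proof of Theorem \ref{aga}, combined with H\"older's inequality with exponents $p/q$ and $r/q$. Necessity I would establish by inserting into \eqref{n2} the $n$-dimensional version of the test function $f=\sigma J$ from the necessity half of Theorem \ref{aga}, whose $J$--factor is built from $[I_n\sigma]^{\frac{r}{q'}}$ and $[I_n^\ast w]^{\frac{r}{p}}$, and then evaluating both sides by repeated integration by parts in each coordinate; here the hypothesis $I_1\sigma_i(\infty)=\infty$ guarantees that this test function lies in $L^p_v(\mathbb{R}^n_+)$ and that the boundary terms in the integrations by parts vanish.

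Finally, the equivalence $B_{MR_n}\approx B_{PS_n}$ I would prove independently of \eqref{n2}, by the discretization argument underlying the one-dimensional case: partition $\mathbb{R}^n_+$ along the level sets $\omega_k=\{I_n\sigma>2^k\}$, apply Proposition \ref{propGHS} (parts (a) and (b), with the dyadic ratios of $I_n\sigma$ furnishing $\rho$ and $\tau$), and integrate by parts in each of the $n$ variables to pass between the $[I_n^\ast w]^{\frac{r}{q}}$--form and the $\bigl(\int_0^{t}(I_n\sigma)^q w\bigr)^{\frac{r}{q}}$--form; this is carried out for $n=2$ in the derivation of \eqref{BBB}, and the factorized structure makes the general $n$ essentially notational. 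The main obstacle, as always for $q<p$, is the inductive gluing: unlike the case $p\le q$, where the governing quantities are suprema and compose transparently, here each dimension contributes an $\ell^r$-- (or $L^{r/q}$--) summation, and the delicate point is to show that applying the one-dimensional criterion coordinate by coordinate reproduces exactly the mixed integral defining $B_{MR_n}$, with equivalence constants that stay finite under the standing restrictions on $p,q$ and do not accumulate uncontrollably in $n$.
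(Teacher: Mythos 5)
There is nothing in the paper to compare your argument against: Theorem \ref{Tq1} is quoted verbatim from \cite[Theorems 3.1, 3.2]{PU} in the survey section \S\,4, which the authors include ``for completeness'' without reproducing any proof. So your proposal can only be judged on its own merits, and there it has a genuine gap precisely at the step you yourself flag as ``the delicate point.'' Reading \eqref{n2} as a one--dimensional Hardy inequality in $x_n$ whose integrand is an $(n-1)$--dimensional weighted norm is a \emph{vector-valued} (Banach-space-valued) Hardy inequality, and for $1<q<p$ the scalar Maz'ya--Rozin criterion does not characterize such inequalities; moreover, even granting both the outer one--dimensional criterion and the inductive hypothesis, the composition of the two produces an iterated condition (an $L^{r}$--type quantity of an $(n-1)$--dimensional $L^{r}$--type quantity), and it is not shown -- and is not automatic -- that this collapses to the single joint integral $B_{MR_n}$. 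This failure of coordinatewise reduction is exactly the obstruction that forces three conditions in Sawyer's Theorem \ref{ES}; the factorization of $v$ is what is supposed to rescue the argument, but your outline never exhibits the mechanism (in \cite{PU} the hypothesis $I_1\sigma_i(\infty)=\infty$ is used to perform the change of variables $u_i=I_1\sigma_i(t_i)$, which maps onto all of $\mathbb{R}_+^n$ and reduces to the case $v\equiv 1$, after which the level sets of $I_n\sigma=\prod_i I_1\sigma_i$ have exploitable product structure).

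A second concrete problem is your fallback sufficiency argument ``by the Sawyer level--set decomposition of $\{I_nf>3^k\}$ used in the proof of Theorem \ref{aga}'': that decomposition covers $\partial\Omega_k$ by the rectangles $S_j^k$, $\widetilde S_j^k$, $T_j^k$ using the staircase structure of a monotone boundary in the \emph{plane}, and it has no straightforward analogue for $n\ge 3$; invoking it for general $n$ is not ``essentially notational.'' The parts of your sketch that are sound are the necessity half (the $n$--dimensional test function $f=\sigma J$ with repeated integration by parts, where $I_1\sigma_i(\infty)=\infty$ controls the boundary terms) and the equivalence $B_{MR_n}\approx B_{PS_n}$ via the level sets $\{I_n\sigma>2^k\}$ and Proposition \ref{propGHS}, which parallels the derivation of \eqref{BBB}--\eqref{3B} in \S\,2 and does survive the passage to general $n$ because it involves only the factorized weight $\sigma$. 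To turn the proposal into a proof you would need to replace the inductive gluing by an honest argument -- either the change-of-variables reduction to $v\equiv1$ followed by a direct $n$--dimensional estimate, or a genuinely vector-valued one--dimensional theorem -- rather than asserting that the one--dimensional criterion applied coordinate by coordinate ``reproduces precisely'' $B_{MR_n}$.
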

\begin{theorem}\label{Tq3}{\rm\cite[Theorems 3.3, 3.4]{PU}}
Let $1<q<p<\infty$. Assume that $w$ in \eqref{n2} satisfies \eqref{n8'} and $I_1^\ast w_1 (0)=\ldots=I_1^\ast w_n(0)=\infty$. Then
\eqref{n2} is valid for all $f\ge 0$ on $\mathbb{R}^n_+$ with $C_n<\infty$ independent of functions $f$\\ {\rm (i)} if and only if $B_{MR_n}^\ast<\infty$, where
\begin{equation*}\label{AMn}B_{MR_n}^\ast:=
\biggl(\int_{\mathbb{R}^n_+}\bigl[I_n\sigma(t_1,\ldots,t_n)\bigr]^{\frac{r}{p'}}\bigl[I_1^\ast w_1(t_1)\bigr]^{\frac{r}{p}}w_1(t_1)\ldots \bigl[I_1^\ast w_n(t_n)\bigr]^{\frac{r}{p}}w_n(t_n)\,{d}t_1\ldots\,{d}t_n\biggr)^{\frac{1}{r}};\end{equation*} {\rm (ii)}
if and only if $B_{PS_n}^\ast<\infty$, where
\begin{multline*}B_{PS_n}^\ast:=
\biggl(\int_{\mathbb{R}^n_+}\biggl(\int_{t_1}^\infty\ldots\int_{t_n}^\infty
\bigl(I_1^\ast w_1\bigr)^{p'}\ldots\bigl(I_1^\ast w_n\bigr)^{p'}
\sigma\biggr)
^{\frac{r}{p'}}\biggr.\\\quad\times\biggl.\bigl[I_1^\ast w_1(t_1)\bigr]^{-\frac{r}{p'}}w_1(t_1)\ldots
\bigl[I_1^\ast w_n(t_n)\bigr]^{-\frac{r}{p'}}w_n(t_n)\,{d}t_1\ldots
\,{d}t_n\biggr)^{\frac{1}{r}}.\end{multline*} Moreover, $C_n\approx B_{MR_n}^\ast\approx B_{PS_n}^\ast$ with equivalence constants dependent on $ p, $ $ q $ and $n.$
\end{theorem}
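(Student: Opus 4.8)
The plan is to deduce Theorem \ref{Tq3} from Theorem \ref{Tq1} by combining the standard duality principle for positive integral operators with the order-reversing change of variables that interchanges $I_n$ and $I_n^\ast$. The starting observation is that the factorization $w(x_1,\ldots,x_n)=w_1(x_1)\cdots w_n(x_n)$ propagates to the tail integral, namely $I_n^\ast w(t_1,\ldots,t_n)=\prod_{i=1}^n I_1^\ast w_i(t_i)$, so that the separability one needs is built in, and the hypothesis $I_1^\ast w_i(0)=\infty$ is exactly the non-degeneracy condition that keeps each one-dimensional factor alive.

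First I would invoke duality. Since $I_n$ is a positive integral operator with kernel $\prod_i\chi_{\{y_i<x_i\}}$, the inequality \eqref{n2} holds with best constant $C_n$ if and only if the transposed inequality
\[
\Bigl(\int_{\mathbb{R}^n_+}(I_n^\ast g)^{p'}\sigma\Bigr)^{\frac{1}{p'}}\le C_n\Bigl(\int_{\mathbb{R}^n_+}g^{q'}w^{1-q'}\Bigr)^{\frac{1}{q'}}\qquad(g\ge 0)
\]
holds with the same $C_n$. In this dual inequality the domain weight $w^{1-q'}=\prod_i w_i^{1-q'}$ is again factorizable, the operator is $I_n^\ast$, and the exponents satisfy $q'>p'$, i.e.\ it is again a genuine $q<p$ situation. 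Crucially the index $r$ is invariant under this passage, because $1/p'-1/q'=1/q-1/p=1/r$.

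Next I would transport the dual statement into the framework of Theorem \ref{Tq1} via the reflection $x\mapsto(1/x_1,\ldots,1/x_n)$, which conjugates $I_n^\ast$ to $I_n$ and $I_1^\ast$ to $I_1$ while preserving factorizability of the domain weight. Applying Theorem \ref{Tq1} to the reflected problem and unwinding the substitution returns the characterizing functionals to the original variables. A direct exponent computation --- using in particular that the $\sigma$-weight of the dual-of-dual problem is $w$ again, since $(1-q')(1-q)=1$, and that the roles of $p,q$ become $q',p'$ --- shows that the Maz'ya--Rozin functional $B_{MR_n}$ of Theorem \ref{Tq1} becomes precisely $B_{MR_n}^\ast$ here, with the power $r/p'$ on $I_n\sigma$ and $r/p$ on each $I_1^\ast w_i$, and likewise $B_{PS_n}$ becomes $B_{PS_n}^\ast$. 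The chain $C_n\approx B_{MR_n}^\ast\approx B_{PS_n}^\ast$ then follows from the corresponding chain in Theorem \ref{Tq1}.

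The step I expect to be most delicate is the precise bookkeeping of this translation: one must verify that the reflection sends the ``head'' operators $I_1,I_n$ to the ``tail'' operators $I_1^\ast,I_n^\ast$ without scrambling which power attaches to which factor, and check that the divergence hypothesis $I_1\sigma_i(\infty)=\infty$ required by Theorem \ref{Tq1} corresponds under the change of variables to $I_1^\ast w_i(0)=\infty$. A secondary point is to confirm that the $n$-dependent equivalence constants are inherited from Theorem \ref{Tq1} without extra dimensional blow-up introduced by the reduction. An alternative, purely direct route would be an induction on $n$ that applies the one-dimensional Persson--Stepanov characterization \cite{PS} in each separated variable; this avoids duality but requires controlling the dependence of the one-dimensional constant on the frozen variables, which is essentially the same obstacle in another guise.
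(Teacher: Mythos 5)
The paper does not prove this statement at all: Theorem \ref{Tq3} is quoted verbatim from \cite[Theorems 3.3, 3.4]{PU} and appears in \S\,4 only ``for completeness,'' so there is no in-paper argument to compare yours against. On its own merits your reduction is correct and essentially complete. The duality step is the standard one for positive kernels (the dual inequality has operator $I_n^\ast$, source weight $w^{1-q'}$, target weight $\sigma$, exponents $q'\to p'$), and the three pieces of bookkeeping you flag as delicate all check out: $1/p'-1/q'=1/q-1/p=1/r$ so $r$ is invariant; $(1-q')(1-q)=1$ so the ``$\sigma$'' of the dual problem is $w$ itself, whence the dual non-degeneracy condition $I_1\tilde\sigma_i(\infty)=\infty$ transports under the reflection $x\mapsto 1/x$ exactly to $I_1^\ast w_i(0)=\infty$; and substituting $(p,q)\mapsto(q',p')$, $\sigma_i\mapsto w_i$, $w\mapsto\sigma$, $I_1\mapsto I_1^\ast$ into $B_{MR_n}$ and $B_{PS_n}$ produces precisely $B_{MR_n}^\ast$ and $B_{PS_n}^\ast$ (using $(p')'=p$), with constants still depending only on $p,q,n$. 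This duality-plus-reflection route is exactly how such starred statements are customarily derived from their unstarred counterparts (compare the paper's own remark after the $B_w$ functional in \S\,3 that the dual statement ``can be carried out through the operator $I_2^\ast f$''), and it is presumably the argument in \cite{PU} as well. The only caveat worth stating explicitly is that Theorem \ref{Tq1} is itself only quoted here, so what you have is a reduction to another cited result rather than a self-contained proof --- which is appropriate given how the theorem is presented.
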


\renewcommand{\refname}{Bibliography}

\end{document}